\documentclass[11pt, a4paper]{article} 

\setlength\arraycolsep{1pt}

\usepackage{soul} 
\usepackage{color, xcolor} 
\usepackage[margin=1in]{geometry} 
\usepackage{amsfonts, amscd, amssymb, mathtools, mathrsfs, dsfont, bbm, bbding} 
\usepackage[amsmath, amsthm, thmmarks]{ntheorem} 
\usepackage{graphicx, xypic, color, float} 
\usepackage{indentfirst}
\usepackage{lmodern}
\usepackage[T1]{fontenc} 
\usepackage{enumerate, listings, verbatim, paralist}
\usepackage{setspace, xspace}
\usepackage[colorlinks=true, citecolor=blue]{hyperref}
\usepackage{extarrows}
\usepackage{pdfpages}
\usepackage{ulem}

\usepackage{setspace}\onehalfspacing
\AtBeginDocument{%
\addtolength\abovedisplayskip{{-}0.15\baselineskip}%
\addtolength\belowdisplayskip{{-}0.15\baselineskip}%
\addtolength\abovedisplayshortskip{{-}0.15\baselineskip}%
\addtolength\belowdisplayshortskip{{-}0.15\baselineskip}%
}

\newtheorem{thm}{Theorem} [section]

\newtheorem{lemma}[thm]{Lemma}
\newtheorem{defn}{Definition}[section]
\newtheorem{assmp}{Assumption}[section]
\newtheorem{remark}{Remark}[section]

\numberwithin{equation}{section} 

\renewcommand{\geq}{\geqslant}
\renewcommand{\leq}{\leqslant}






\newcommand{\dt}{\ensuremath{\operatorname{d}\! t}}
\newcommand{\ds}{\ensuremath{\operatorname{d}\! s}}

\newcommand{\dw}{\ensuremath{\operatorname{d}\! W}}



%



\begin{document}
\title{Competitive optimal portfolio selection under mean-variance criterion} 
\date{\today}
\author{Guojiang Shao\thanks{School of Mathematical Sciences, Fudan University, Shanghai 200433, China (\url{gjshao23@m.fudan.edu.cn}).}
	\and Zuo Quan Xu\thanks{Contact author. Department of Applied Mathematics, The Hong Kong Polytechnic University, Kowloon, Hong Kong SAR, China (\url{maxu@polyu.edu.hk}).}
	\and Qi Zhang\thanks{School of Mathematical Sciences, Fudan University, Shanghai 200433, China and Laboratory of Mathematics for Nonlinear Science, Fudan University, Shanghai 200433, China (\url{qzh@fudan.edu.cn}).}
}

\maketitle
\begin{abstract} 
We investigate a portfolio selection problem involving multi competitive agents, each exhibiting mean-variance preferences. Unlike classical models, each agent's utility is determined by their relative wealth compared to the average wealth of all agents, introducing a competitive dynamic into the optimization framework. To address this game-theoretic problem, we first reformulate the mean-variance criterion as a constrained, non-homogeneous stochastic linear-quadratic control problem and derive the corresponding optimal feedback strategies. The existence of Nash equilibria is shown to depend on the well-posedness of a complex, coupled system of equations. Employing decoupling techniques, we reduce the well-posedness analysis to the solvability of a novel class of multi-dimensional linear backward stochastic differential equations (BSDEs). We solve a new type of nonlinear BSDEs (including the above linear one as a special case) using fixed-point theory. Depending on the interplay between market and competition parameters, three distinct scenarios arise: (i) the existence of a unique Nash equilibrium, (ii) the absence of any Nash equilibrium, and (iii) the existence of infinitely many Nash equilibria. These scenarios are rigorously characterized and discussed in detail.
\bigskip\\
\textbf{Keywords}: Competitive portfolio selection, mean-variance, stochastic linear quadratic problem, backward stochastic differential equation.
\end{abstract}

 \section{Introduction}

The seminal work of Markowitz \cite{markowitz1952modern,markowitz2008portfolio} introduced mean-variance (MV) analysis, establishing a foundational framework for optimizing asset allocation by balancing risk and return. Since then, MV theory has been extended in numerous directions, including the incorporation of stochastic factors, which are central to the models considered in this paper. In parallel, the study of multi-agent games and, more broadly, mean-field games has emerged as a vibrant area in mathematical finance, particularly in the context of multi-agent optimal investment. These models capture the interactions among multiple investors, where each agent seeks to optimize not only their individual wealth but also their relative performance compared to others. Consequently, the agents' decisions are influenced by both their personal risk preferences and the competitive dynamics of the market.

This paper investigates competitive optimal portfolio selection under the MV criterion, bridging game theory and portfolio theory in non-Markovian market setting. In contrast to traditional frameworks where agents optimize in isolation, we consider a scenario in which agents compete to maximize their terminal wealth relative to the average wealth of all agents. This leads naturally to a non-cooperative stochastic differential game formulation.

The continuous-time MV portfolio selection problem has been extensively studied under various market assumptions and methodological approaches. We briefly review some key developments in this area. Li and Zhou \cite{zhou2000continuous} addressed the continuous-time MV portfolio selection problem using the embedding technique and stochastic linear-quadratic (LQ) control theory. Under the constraint of no short-selling, Li, Zhou, and Lim \cite{li2002dynamic} analyzed the MV portfolio selection problem in continuous time via the Hamilton-Jacobi-Bellman (HJB) equation and two Riccati equations. In a complete market with random coefficients, Lim and Zhou \cite{lim2002mean} investigated the continuous-time MV problem using stochastic LQ control and backward stochastic differential equation (BSDE) theory. Subsequent works by Zhou and Yin \cite{zhou2003markowitz} and Xiong and Zhou \cite{xiong2007mean} extended the MV framework to settings with regime switching and partial information, respectively. More recently, Hu, Shi, and Xu \cite{MR4729330} generalized the problem to non-homogeneous stochastic LQ control with random coefficients and regime-switching dynamics, applying their results to asset-liability management under MV criterion.

Meanwhile, optimal investment and reinsurance strategies under relative performance criteria in mean-field and multi-agent games have garnered increasing attention in recent years. The study of portfolio games with relative performance considerations can be traced back to Espinosa and Touzi \cite{espinosa2015optimal}, who examined multi-agent games with portfolio constraints under CARA utility by analyzing the associated systems of quadratic BSDEs. Subsequently, Lacker and Zariphopoulou \cite{lacker2019mean} derived explicit constant equilibrium strategies for both CARA and CRRA utilities in Markovian markets, utilizing HJB equations. Building on these results, extensions to proportional reinsurance and investment were proposed by Bo, Wang, and Zhou \cite{bo2024mean}, as well as He, He, Chen, and Liu \cite{he2023mean}. More recently, Wang, Xu, and Zhang \cite{wang2024competitive} advanced this line of research by investigating competitive portfolio selection in non-Markovian markets, employing quadratic BSDEs to characterize Nash equilibria under both CARA and CRRA utility frameworks.

However, existing results on competitive optimal portfolio selection under the MV criterion are relatively scarce. In fact, research in this area has primarily focused on time-consistent Nash equilibrium strategies, as seen in works such as Guan and Hu \cite{guan2022time}, Zhu, Guan, and Li \cite{zhu2020time}, and Yang, Chen, and Xu \cite{yang2020time}. These studies investigated time-consistent investment and proportional reinsurance strategies for competitive insurers under the MV criterion, utilizing the extended HJB equations developed by Bj{\''o}rk, Khapko, and Murgoci \cite{bjork2017time}. For further related results on portfolio selection based on relative performance, see Deng, Su, and Zhou \cite{deng2024relative}, Fu \cite{fu2023mean}, Fu and Zhou \cite{fuguan2023mean}, Lacker and Soret \cite{lacker2020many}, Liang and Zhang \cite{liang2024time}, and Zhang and Huang \cite{zhang2024optimal}, among others.

Distinct from the aforementioned results on time-consistent Nash equilibrium strategies, the problem addressed in this paper involves time-inconsistent Nash equilibrium strategies under the MV criterion for a multi-agent game, where the extended HJB equations are not applicable. To tackle this game-theoretic problem, we first reformulate it as a constrained stochastic LQ control problem with a non-homogeneous state equation. Employing Lagrange duality, we derive the optimal feedback strategy for each agent. The existence of a Nash equilibrium requires analyzing a coupled system comprising linear optimal feedback controls, forward stochastic differential equations (SDEs), and BSDEs. By applying decoupling techniques, we separate the SDEs from the coupled system and characterize the Nash equilibrium via a novel class of linear multi-dimensional BSDEs with random coefficients:
\begin{equation*} \label{intro}
	\left\{
	\begin{array}{l}
		\mathrm{d}\boldsymbol{h}(t) = -\left\{A(t)\boldsymbol{h}(t) + B(t)\boldsymbol{\eta}(t) + C(t)\boldsymbol{h}(0) + F(t) \right\} \dt + \boldsymbol{\eta}(t) \dw(t), \quad t \in [0, T], \\
		\boldsymbol{h}(T) = 0.
	\end{array}
	\right.
\end{equation*}
Notably, the driver of this BSDE depends on the solution $\boldsymbol{h}(0)$, making it a nonstandard BSDE. To ensure the admissibility of the Nash equilibrium, we establish the solvability of a class of general nonlinear BSDEs (including the above linear one as a special case) using fixed-point theory. Depending on the market and competition parameters, three scenarios may arise: the existence of a unique Nash equilibrium, the absence of any Nash equilibrium, or the existence of infinitely many Nash equilibria. These scenarios are thoroughly analyzed and discussed. It is worth emphasizing that, in contrast to the results in \cite{lacker2019mean}, \cite{guan2022time}, and \cite{wang2024competitive}, our derived strategy depends on both the initial and current values of wealth.

The remainder of this paper is organized as follows. In Section~\ref{sec:pf}, we formulate the competitive optimal portfolio selection problem under MV criterion and turn it into a constrained stochastic LQ control problem. Section~\ref{sxz7} presents the derivation of the optimal strategy for each agent under the MV criterion. In Section~\ref{sxz12}, we address the Nash equilibrium by analyzing a coupled system comprising linear optimal feedback controls, SDEs, and BSDEs. Section~\ref{sec:example} considers a special case to illustrate our theoretical results. Finally, Section~\ref{sec:conclusion} summarizes our findings in a comprehensive table. Appendix~\ref{appendix} provides a proof of the well-posedness for a new class of general nonlinear BSDEs, whose linear form arises in our study.

\section{Problem Formulation}\label{sec:pf}

Let $(\Omega, \mathcal{F}, \mathbb{P})$ be a complete probability space, and $\{W(t)\}_{t\in[0,T]}$ is a one-dimensional Brownian motion on it. 
Denote by $\mathbb{F}=\{\mathcal{F}_t\}_{t\in[0,T]}$ the filtration generated by $W$.
For $n\in\mathbb{N}$, we define some useful spaces as follows. 
\begin{align*}
\begin{array}{rl} 
L_{\mathbb{F}}^{\infty}\left(0, T ; \mathbb{R}^n \right): & \text {the set of $\mathbb{F}$-adapted essentially bounded } \mathbb{R}^n \text{-valued processes;}\\ 
L_{\mathbb{F}}^{\infty}\left(0, T ; \mathbb{R}^{n\times n} \right): &  \text {the set of $\mathbb{F}$-adapted essentially bounded } \mathbb{R}^{n\times n} \text{-valued processes};\\
L_{\mathbb{F}}^{\infty}\left(0, T ; \mathbb{R}_{+}\right): &  \text {the set of $\mathbb{F}$-adapted essentially bounded nonnegative processes};\\
L_{\mathbb{F}}^{\infty}\left(0, T ; \mathbb{R}_{>0}\right): &  \text {the set of $\mathbb{F}$-adapted essentially bounded positive processes};\\
L_{\mathbb{F}}^{\infty}\left(0, T ; \mathbb{R}_{\gg 1}\right): &  \text {the set of $\mathbb{F}$-adapted processes } v:[0, T] \times \Omega \rightarrow(0,+\infty) ~\text{such}\\
& \text {that $c^{-1} \leqslant v(t) \leqslant c$ a.e. a.s. for some constant $c>0$};\\
L_{\mathbb{F}}^{\infty}\left(0, T ; \mathbb{R}_{\ll -1}\right): &  \text {the set of $\mathbb{F}$-adapted processes }$ {\small$v:[0, T] \times \Omega \to(-\infty,0)$} $\text{such}\\
& \text {that $c^{-1} \leqslant v(t) \leqslant c$ a.e. a.s. for some constant $c<0$};\\
L_{\mathbb{F}}^2\left(0, T ; \mathbb{R}^n\right): &  \text {the set of $\mathbb{F}$-adapted processes } v:[0, T] \times \Omega \rightarrow \mathbb{R}^n ~\text {such that } \\
& \mathbb{E}\left[\int_0^T|v(t)|^2 \dt\right]<\infty;\\
S_{\mathbb{F}}^2\left(0,T; \mathbb{R}^n\right): &  \text {the set of $\mathbb{F}$-adapted processes $v:[0, T] \times \Omega \rightarrow \mathbb{R}^n$ with} \\
& \text {continuous sample paths such that } \mathbb{E}\left[\sup_{t \in[0, T]}|v(t)|^2\right]<\infty.%
\end{array}
\end{align*}

BMO martingale, which is a short form of the martingale of bounded mean oscillation, plays a big role in this paper. For any $f \in L_{\mathbb{F}}^2(0, T ; \mathbb{R}^1)$, $\int_0^{\cdot} f(s) \dw(s)$ is a BMO martingale on $[0, T]$ if and only if there exists a constant $c>0$ such that
\begin{equation*}
	\mathbb{E}\left[\int_\tau^T|f(s)|^2 \mathrm{~d} s\; \bigg|\; \mathcal{F}_\tau\right] \leqslant c,
\end{equation*}
holds for all $\mathbb{F}$-stopping times $\tau \leqslant T$. We denote the space of BMO martingales by
{\small\begin{equation*}
	L_{\mathbb{F}}^{2, \mathrm{BMO}}(0, T ; \mathbb{R}^n)=\left\{f \in L_{\mathbb{F}}^2(0, T ; \mathbb{R}^n) : \int_0^{\cdot} f(s) \dw(s) ~\text {is a BMO martingale on }[0, T]\right\}.
\end{equation*}}

We now introduce our financial market, in which there is a risk-free asset (bond) and $n\geq 2$ risky assets (stocks). Correspondingly, there are $n$ agents in the market, each of which has its preference for a stock to invest. Consequently,
the dynamic equations of bond $S_0 = \{S_0(t)\}_{t\in[0,T]}$ and the stock $i$ for the agent $i$ $S_i=\{S_i(t)\}_{t\in[0,T]}$ are given by
\begin{equation*}
	\left\{\begin{array}{l}
		\frac{\mathrm{d} S_0(t)}{S_0(t)} = r(t)\dt, \quad S_0(0)=s_0>0, \quad t \in[0, T],\\
		\frac{\mathrm{d} S_i(t)}{S_i(t)}=\mu_i(t)\dt+\sigma_i(t)\dw(t), \quad S_i(0)=s_i>0, \quad t \in[0, T],
	\end{array}\right.
\end{equation*}
where $r\in L_{\mathbb{F}}^{\infty}(0, T;\mathbb{R}_+)$, $\mu_i\in L_{\mathbb{F}}^{\infty}(0, T;\mathbb{R}_+)$ and $\sigma_i\in L_{\mathbb{F}}^{\infty}(0, T;\mathbb{R}_{\gg 1})$ serve as the interest rate, the appreciation rate of stock $i$ and the volatility, respectively.
Our model is non-Markovian since these parameters are stochastic. 

Denote by $\rho_i \triangleq \frac{\mu_i - r}{\sigma_i}\in L_{\mathbb{F}}^{\infty}(0, T;\mathbb{R}^1)$ the risk premium of stock $i$. If $\rho_i\equiv0$, there is no motivation for agent $i$ to invest in stock $i$, hence in the rest of this paper, we always assume $\rho_i\not\equiv0$. For simplicity, we only consider the case that the common noise $W$ in the market is $1$-dimensional Brownian motion.

Denote by $\{\pi_i(t)\}_{t\in[0,T]}$ the amount of money invested in stock $i$. Then the self-financing wealth of agent $i$, $\{X_i(t)\}_{t\in[0,T]}$, is given by
\begin{equation*}
	\left\{\begin{aligned}
		\mathrm{d}X_i(t) 
		&= \left[r(t)X_i(t) + \pi_i(t)\sigma_i(t) \rho_i(t)\right]\dt+\pi_i(t)\sigma_i(t)\dw(t), \quad t \in[0, T],
		\\ X_i(0)&=x_i .
	\end{aligned}\right.
\end{equation*}
\begin{defn}
	A vector portfolio strategy $\boldsymbol{\pi}\triangleq (\pi_1, \pi_2, \cdots, \pi_n)^\top$ is called admissible if $\boldsymbol{\pi} \in L_{\mathbb{F}}^2(0, T; \mathbb{R}^n)$.
\end{defn}
	Set $\mathcal{U} \triangleq L_{\mathbb{F}}^2(0, T; \mathbb{R}^1)$, 
	 then $\boldsymbol{\pi}$ is admissible if and only if $\pi_i\in \mathcal{U}$ for all $i=1,2,\cdots,n$.

In our game, each agent aims to outperform the others.
We assume that every agent uses an MV preference on the relative wealth.
The arithmetic average wealth at time $T$ is defined as
$$
\bar{X}(T) \triangleq \frac{1}{n} \sum_{i = 1}^n X_i(T).
$$
For agent $i$, the relative wealth compared to others is defined as $X_i(T) - \theta_i \bar{X}(T)$, where $\theta_i \in [0, 1]$ is a parameter describing agent $i$'s relative preference between their own wealth and average wealth. The agent $i$'s MV preference is formulated as
\begin{equation} \label{func0}
	J_i(\pi_1, \pi_2, \ldots, \pi_n, \theta_i, \gamma_i) \triangleq \mathbb{E}[X_i(T) - \theta_i \bar{X}(T)] - \frac{\gamma_i}{2} \text{Var}[X_i(T) - \theta_i \bar{X}(T)],
\end{equation}
where $\gamma_i > 0$ is the risk aversion parameter of agent $i$. 

To simplify our problem, we put forward a new cost functional
\begin{equation} \label{func1}
	\hat{J}_i(\pi_1, \pi_2, \ldots, \pi_n, \theta_i, \gamma_i) \triangleq \mathbb{E}[X_i(T) - \theta_i \hat{X}_i(T)] - \frac{\gamma_i}{2} \text{Var}[X_i(T) - \theta_i \hat{X}_i(T)],
\end{equation}
where
\begin{equation*}
	\hat{X}_i(t) \triangleq \frac{\sum_{k\neq i} X_k(t)}{n - 1},
\end{equation*}
satisfying the state equation
\begin{equation*}\label{sxz1}
	\left\{\begin{aligned}
		\mathrm{d}\hat{X}_i(t) &= [r(t)\hat{X}_i(t) + \widehat{(\rho \sigma \pi)}_i(t)]\dt + \widehat{(\sigma\pi)}_i(t) \dw(t), \quad t \in[0, T], \\
		\hat{X}_i(0) &= \hat{x}_i,
	\end{aligned}\right.
\end{equation*}
with 
\begin{align*}
	\widehat{(\rho \sigma \pi)}_i(t) &\triangleq \frac{1}{n - 1}\sum_{k\neq i} \rho_k(t) \sigma_k(t) \pi_k(t),\\
\widehat{(\sigma\pi)}_i(t) &\triangleq \frac{1}{n - 1}\sum_{k\neq i} \sigma_k(t)\pi_k(t),\\
\hat{x}_i &\triangleq \frac{1}{n - 1}\sum_{k\neq i} x_k.
\end{align*}
	
A direct computation reveals the relation between the two cost functionals (\ref{func0}) and (\ref{func1}):
	\begin{equation*}
		J_i\left(\pi_1, \pi_2, \ldots, \pi_n, \theta_i, \gamma_i\right) = \left(1 - \frac{\theta_i}{n}\right) \hat{J}_i\left(\pi_1, \pi_2, \ldots, \pi_n, \frac{(n-1)\theta_i }{n - \theta_i}, \left(1 - \frac{\theta_i}{n}\right)\gamma_i\right),
	\end{equation*}
	where \(\left(1 - \frac{\theta_i}{n}\right)\gamma_i>0\) and $\frac{(n-1)\theta_i }{n - \theta_i}$ monotonically increases from \(0\) to \(1\) as \(\theta_i\) increases from \(0\) to \(1\). Hence optimizing the cost functional \eqref{func0} is equivalent to optimizing the cost functional \eqref{func1} with a trivial difference in parameters $\theta_i$ and $\gamma_i$. For simplicity, we focus on the cost functional \eqref{func1} in the rest of the paper. We define Nash equilibrium as follows. 
\begin{defn} \label{nashdefinition}
	An admissible vector strategy \( \boldsymbol{\pi}^* = (\pi_1^*,\pi_2^*, \ldots, \pi_n^*)^\top \) is called a Nash equilibrium (strategy) if, for each agent \( i \in \{1, \ldots, n\} \) and any \( \pi_i \in \mathcal{U} \),
	\begin{equation*}
		\hat{J}_i\left(\pi_1^*, \ldots, \pi_{i-1}^*, \pi_i^*, \pi_{i-1}^*, \ldots, \pi_n^*; \theta_i, \gamma_i\right) \geq \hat{J}_i\left(\pi_1^*, \ldots, \pi_{i-1}^*, \pi_i, \pi_{i+1}^*, \ldots, \pi_n^*; \theta_i, \gamma_i\right).
	\end{equation*}
\end{defn}

In the rest of this section, we further simplify our model. 
Set \(Z_i(t) \triangleq X_i(t) - \theta_i \hat{X}_i(t)\) as a new state variable. Then it satisfies the dynamic equation
\begin{equation} \label{state1}
	\left\{
	\begin{aligned}
		\mathrm{d}Z_i(t)= &\left[ r(t)Z_i(t) + \rho_i(t)\sigma_i(t)\pi_i(t) - \theta_i \widehat{(\rho \sigma \pi)}_i(t) \right] \dt\\
&+ \left[ \sigma_i(t)\pi_i(t) - \theta_i \widehat{(\sigma \pi)}_i(t) \right] \dw(t), \quad t \in[0, T],\\
		Z_i(0)= & z_i \triangleq x_i - \theta_i \hat{x}_i.
	\end{aligned}
	\right.
\end{equation}

When the \( n-1 \) agents' strategies \( \pi_k \in \mathcal{U} \), \( k \neq i \), are fixed, the game problem for agent $i$ reduces to an MV portfolio selection problem:
\begin{equation} \label{prob1}
	\begin{array}{rl}
		\max \limits_{\pi_i} & \mathbb{E}[Z_i(T)] - \frac{\gamma_i}{2} \operatorname{Var}(Z_i(T)), \\
		\text{subject to} & \left\{
		\begin{array}{l}
			\pi_i \in \mathcal{U}, \\
			(Z_i, \pi_i) \text{ satisfies } \eqref{state1}.
		\end{array}
		\right.
	\end{array}
\end{equation}
As its cost functional involves $\operatorname{Var}(\cdot)$, it is a mean field stochastic control problem. To avoid using the dedicated mean field stochastic control theory, we introduce the following constrained stochastic control problem, parameterized by a target \( d \in \mathbb{R}^1 \):
\begin{equation} \label{prob2}
	\begin{array}{rl}
		\min \limits_{\pi_i} & \operatorname{Var}(Z_i(T)) = \mathbb{E}\left[Z_i(T) - d\right]^2, \\
		\text{subject to} & \left\{
		\begin{array}{l}
			\mathbb{E}[Z_i(T)] = d, \\
			\pi_i \in \mathcal{U}, \\
			(Z_i, \pi_i) \text{ satisfies } \eqref{state1}.
		\end{array}
		\right.
	\end{array}
\end{equation}

Since \eqref{prob2} is a convex optimization problem, we can introduce a Lagrange multiplier \( \lambda \in \mathbb{R}^1 \) to deal with the goal constraint \( \mathbb{E}[Z_i(T)] = d \). Then \eqref{prob2} can be further transformed into an unconstrained stochastic control problem, i.e. for each fixed \( \lambda \),
\begin{equation}\label{sxz5}
\begin{array}{rl}
	\min \limits_{\pi_i} & \mathbb{E}\left[|Z_i(T) - d|^2\right] + 2\lambda\left(\mathbb{E}[Z_i(T)] - d\right)\triangleq J_i(\pi_i,\lambda), \\
	\text{subject to} & \left\{
	\begin{array}{l}
		\pi_i \in \mathcal{U}, \\
		(Z_i, \pi_i) \text{ satisfies } \eqref{state1},
	\end{array}
	\right.
\end{array}
\end{equation}
where the constant \( 2 \) in front of \( \lambda \) is used to complete the square. As a result, above control problem is equivalent to
\begin{equation} \label{prob3}
	\begin{array}{rl}
		\min \limits_{\pi_i} & \mathbb{E}\left[|Z_i(T) - b|^2\right]\triangleq\mathscr{J}_i(\pi_i) ,\\
		\text{subject to} & \left\{
		\begin{array}{l}
			\pi_i \in \mathcal{U}, \\
			b = d - \lambda ,\\
			(Z_i, \pi_i) \text{ satisfies } \eqref{state1}.
		\end{array}
		\right.
	\end{array}
\end{equation}
Therefore, to solve the MV portfolio selection problem \eqref{prob1}, the key is to first solve the stochastic LQ control problem \eqref{prob3}.

\section{Solutions for the MV Problems \eqref{prob1}-\eqref{prob3}}\label{sxz7}

In this section, we fix the strategies \( \pi_k \in \mathcal{U} \), \( k \neq i \), of the \( n-1 \) agents, and solve the MV portfolio selection problems \eqref{prob1}-\eqref{prob3} for agent $i$. 

We first introduce two useful BSDEs,
\begin{equation} \label{bsde1}
	\left\{\begin{aligned}
		& \mathrm{d}p_i = - \left( \left( 2r - \rho_i^2 \right) p_i-2 \rho_i \Lambda_i- \frac{\Lambda_i^2}{p_i} \right) \dt + \Lambda_i \dw, \quad t \in[0, T], \\
		& p_i(T)= 1,
	\end{aligned}\right.
\end{equation}
and
\begin{equation} \label{bsde2}
	\left\{\begin{array}{l}
		\mathrm{d}h_i=\left\{rh_i + \theta_i \widehat{(\rho \sigma \pi)}_i - \theta_i \rho_i \widehat{(\sigma\pi)}_i + \rho_i \eta_i\right\} \dt+\eta_i \dw, \quad t \in[0, T], \\
		h_i(T)= -(d-\lambda).
	\end{array}\right.
\end{equation}
Here and hereafter, we may omit time variables in equations and formulas if it does not cause confusion.

\begin{lemma}
	BSDE \eqref{bsde1} admits a unique solution $(p_i,\Lambda_i)\in L_{\mathbb{F}}^{\infty}\left(0, T ; \mathbb{R}_{\gg 1}\right) \times L_{\mathbb{F}}^{2, \mathrm{BMO}}(0, T ; \mathbb{R}^1)$. Furthermore, \( p_i(t) \) is explicitly given by
	\begin{equation} \label{pit}
		p_i(t) =\frac{1}{\mathbb{E}\left[ \exp\left( \int_t^T -2\rho_i(s)\dw(s) + \int_t^T \left(-2r(s) - |\rho_i(s)|^2\right)\ds \right) \bigg| \mathcal{F}_t \right] }, \quad t \in[0, T].
	\end{equation}
\end{lemma}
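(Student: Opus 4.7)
Equation \eqref{bsde1} is a quadratic Riccati-type BSDE whose proposed solution $p_i$ in \eqref{pit} is the reciprocal of a conditional expectation, strongly suggesting that the substitution $q_i\triangleq 1/p_i$ linearizes it. My plan is to (i) define $q_i$ directly by the conditional expectation in the denominator of \eqref{pit}, (ii) obtain two-sided positive bounds on $q_i$ via a Girsanov change of measure, (iii) derive a linear BSDE for $q_i$ via martingale representation and \ito, (iv) invert to recover $(p_i,\Lambda_i)$ solving \eqref{bsde1}, and (v) verify BMO regularity and uniqueness.

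\textbf{Construction and bounds.} Setting $q_i(t)\triangleq\mathbb{E}[\exp(-\int_t^T 2\rho_i\dw-\int_t^T(2r+\rho_i^2)\ds)\,|\,\mathcal{F}_t]$, the essential boundedness of $\rho_i$ ensures Novikov's condition for $\mathcal{E}(-2\int_0^\cdot\rho_i\dw)$, so this defines an equivalent probability measure $\mathbb{Q}$; by Bayes' rule
\[
q_i(t)=\mathbb{E}^{\mathbb{Q}}\!\left[\exp\!\left(\int_t^T(\rho_i^2-2r)\ds\right)\bigg|\mathcal{F}_t\right].
\]
Since $r\geq 0$ is essentially bounded (giving the lower bound) and $\rho_i,r\in L_\mathbb{F}^{\infty}$ (giving the upper bound), the exponent is pathwise bounded, yielding $q_i\in L_\mathbb{F}^{\infty}(0,T;\mathbb{R}_{\gg1})$. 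Writing $E(t)=\exp(-\int_0^t 2\rho_i\dw-\int_0^t(2r+\rho_i^2)\du)$, the martingale $M(t)\triangleq E(t)q_i(t)=\mathbb{E}[E(T)|\mathcal{F}_t]$ admits a representation $dM=\zeta\dw$ for some $\zeta\in L_\mathbb{F}^2$; applying \ito{} to $q_i=M/E$ together with $dE=E[(\rho_i^2-2r)\ds-2\rho_i\dw]$ and setting $\zeta_i\triangleq\zeta/E+2\rho_i q_i$ produces the linear BSDE
\[
dq_i=\bigl[(2r-\rho_i^2)q_i+2\rho_i\zeta_i\bigr]\dt+\zeta_i\dw,\qquad q_i(T)=1.
\]
Setting $p_i\triangleq 1/q_i\in L_\mathbb{F}^{\infty}(0,T;\mathbb{R}_{\gg1})$ and $\Lambda_i\triangleq-\zeta_i/q_i^2$, a second application of \ito{} recovers \eqref{bsde1} after the quadratic term $(dq_i)^2/q_i^3$ is rewritten in terms of $\Lambda_i$.

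\textbf{BMO regularity and uniqueness.} To show $\Lambda_i\in L_\mathbb{F}^{2,\mathrm{BMO}}(0,T;\mathbb{R}^1)$, I would apply \ito{} to $q_i^2$ and take conditional expectation over $[\tau,T]$ for an arbitrary $\mathbb{F}$-stopping time $\tau\leq T$; the two-sided bounds on $q_i$ and the essential boundedness of $2r-\rho_i^2$ and $2\rho_i$ allow absorbing the linear-in-$\zeta_i$ drift term via Cauchy--Schwarz, yielding $\mathbb{E}[\int_\tau^T|\zeta_i|^2\ds|\mathcal{F}_\tau]\leq C$ uniformly in $\tau$, from which the corresponding BMO bound on $\Lambda_i=-\zeta_i/q_i^2$ follows since $q_i$ is bounded away from zero. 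Uniqueness is obtained by the same transformation in reverse: for any solution $(\tilde p_i,\tilde\Lambda_i)$ in the stated classes, the lower bound on $\tilde p_i$ makes $1/\tilde p_i$ smooth enough for \ito{} to produce exactly the linear BSDE for $q_i$ with bounded coefficients, and classical linear-BSDE uniqueness then forces $1/\tilde p_i=q_i$, hence $(\tilde p_i,\tilde\Lambda_i)=(p_i,\Lambda_i)$. The main obstacle in this plan is the BMO estimate: the \ito{} transformations introduce quadratic terms in $\zeta_i$ that must be controlled uniformly across all stopping times, and this succeeds precisely because the bounds on $q_i$ are two-sided and the coefficients $r,\rho_i$ are essentially bounded.
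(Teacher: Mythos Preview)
Your proposal is correct, and the key mathematical idea---the reciprocal substitution $q_i=1/p_i$ that linearizes the Riccati BSDE \eqref{bsde1}---coincides with the paper's. The difference lies in the overall architecture. The paper invokes Theorem~3.2 of \cite{wang2024competitive} to dispose of existence, uniqueness, and the BMO regularity of $(p_i,\Lambda_i)$ in one stroke, and then only uses the transformation $(\check p_i,\check\Lambda_i)=(1/p_i,-\Lambda_i/p_i^2)$ to read off the explicit formula \eqref{pit} from the resulting linear BSDE. Your route is constructive and self-contained: you \emph{start} from the conditional expectation defining $q_i$, obtain the two-sided bounds via Girsanov, derive the linear BSDE by martingale representation, invert, and then establish the BMO bound directly by applying \ito{} to $q_i^2$ and absorbing the cross term with Young's inequality; uniqueness follows by running the transformation backwards and invoking standard linear BSDE uniqueness. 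Both approaches are equally valid; yours trades brevity for independence from the cited reference, which is a reasonable choice if you want the argument to stand on its own.
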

\begin{proof}

The first part of the claim follows from Theorem 3.2 in \cite{wang2024competitive}. To establish \eqref{pit}, observe that \((\check{p}_i,\check{\Lambda}_i) \triangleq \left( \frac{1}{p_i}, -\frac{\Lambda_i}{p_i^2} \right) \) solves the linear BSDE
	\begin{equation*}
		\left\{\begin{aligned}
			& \mathrm{d}\check{p}_i = - \left(- \left( 2r - \rho_i^2 \right) \check{p}_i-2 \rho_i \check{\Lambda}_i \right) \dt + \check{\Lambda}_i \dw, \quad t \in[0, T], \\
			& \check{p}_i(T)= 1.
		\end{aligned}\right.
	\end{equation*}
By a simple change of measure, we obtain the explicit expression:
\begin{equation}\label{sxz15}
	\check{p}_i(t) = \mathbb{E}\left[ \exp\left( \int_t^T -2\rho_i(s)\dw(s) + \int_t^T \left(-2r(s) - |\rho_i(s)|^2\right)\ds \right)\; \Bigg|\; \mathcal{F}_t \right].
\end{equation}
So \( p_i(t) = \frac{1}{\check{p}_i(t)} \) yields \eqref{pit}.
\end{proof}

\begin{lemma}
	BSDE \eqref{bsde2} admits a unique solution $$(h_i,\eta_i)\in S_{\mathbb{F}}^2\left(0,T; \mathbb{R}^1\right) \times L_{\mathbb{F}}^{2}(0, T ; \mathbb{R}^1).$$
\end{lemma}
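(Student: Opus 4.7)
The plan is to recognize \eqref{bsde2} as a linear BSDE with uniformly Lipschitz coefficients and square-integrable data, and then invoke the classical Pardoux--Peng existence and uniqueness theorem for Lipschitz BSDEs. Writing the equation in standard form, the generator reads
$f(t,h,\eta) \triangleq r(t)\,h + \rho_i(t)\,\eta + g_i(t),$
with free term $g_i(t) \triangleq \theta_i\,\widehat{(\rho\sigma\pi)}_i(t) - \theta_i\,\rho_i(t)\,\widehat{(\sigma\pi)}_i(t)$ and terminal value $\xi \triangleq -(d-\lambda)$, which is deterministic. Since $r,\rho_i \in L_{\mathbb{F}}^{\infty}(0,T;\mathbb{R}^1)$, the generator is Lipschitz in $(h,\eta)$ uniformly in $(t,\omega)$, which is exactly the hypothesis required for the classical theory.

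The one substantive point to verify is that the data lies in the right space. The terminal value is a constant, hence trivially in $L^2(\Omega,\mathcal{F}_T,\mathbb{P})$. For the free term, the admissibility assumption gives $\pi_k \in \mathcal{U} = L_{\mathbb{F}}^{2}(0,T;\mathbb{R}^1)$ for $k\neq i$; combined with the essential boundedness of $\rho_k$ and $\sigma_k$, both averages $\widehat{(\rho\sigma\pi)}_i$ and $\widehat{(\sigma\pi)}_i$ belong to $L_{\mathbb{F}}^{2}(0,T;\mathbb{R}^1)$, and multiplication by the bounded process $\rho_i$ preserves this $L^2$ integrability. Hence $g_i \in L_{\mathbb{F}}^{2}(0,T;\mathbb{R}^1)$.

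With these hypotheses in hand, the classical Pardoux--Peng theorem delivers a unique pair $(h_i,\eta_i) \in S_{\mathbb{F}}^{2}(0,T;\mathbb{R}^1) \times L_{\mathbb{F}}^{2}(0,T;\mathbb{R}^1)$ solving \eqref{bsde2}. As an alternative that also furnishes an explicit representation useful later, one may introduce the adjoint $\Gamma_i$ satisfying $\mathrm{d}\Gamma_i(t) = \Gamma_i(t)\bigl(r(t)\dt + \rho_i(t)\dw(t)\bigr)$ with $\Gamma_i(0)=1$, apply It\^o's formula to $\Gamma_i h_i$, and obtain
$h_i(t) = \frac{1}{\Gamma_i(t)}\,\mathbb{E}\!\left[\,\Gamma_i(T)\,\xi + \int_t^T \Gamma_i(s)\,g_i(s)\,\ds \,\bigg|\, \mathcal{F}_t\right],$
with $\eta_i$ identified via the martingale representation theorem; the space memberships then follow from the boundedness of $r,\rho_i$ (which provides finite moments of $\Gamma_i$ and $\Gamma_i^{-1}$ of every order) together with Burkholder--Davis--Gundy. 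The only real obstacle is the $L^2$ check on $g_i$, and this is immediate once the competitors' strategies are admissible; no further technical subtlety is anticipated.
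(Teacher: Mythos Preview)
Your proposal is correct and follows essentially the same route as the paper: verify that $r,\rho_i$ are bounded so the generator is uniformly Lipschitz in $(h,\eta)$, check that the free term $\theta_i\widehat{(\rho\sigma\pi)}_i-\theta_i\rho_i\widehat{(\sigma\pi)}_i$ lies in $L^2_{\mathbb{F}}(0,T;\mathbb{R}^1)$ by admissibility of the competitors' strategies, and then appeal to the standard well-posedness theory for linear (Lipschitz) BSDEs. The paper compresses all of this into a single sentence (``the conclusion follows immediately''), whereas you spell out the Pardoux--Peng invocation and add an explicit adjoint representation; the latter is extra but harmless.
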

\begin{proof}
	Notice $r\in L_{\mathbb{F}}^{\infty}\left(0, T ; \mathbb{R}_{+}\right)$, $\rho_i \in L_{\mathbb{F}}^{\infty}(0, T ; \mathbb{R}^1)$ and $\theta_i \widehat{(\rho \sigma \pi)}_i - \theta_i \rho_i \widehat{(\sigma\pi)}_i \in L_{\mathbb{F}}^2\left(0, T ; \mathbb{R}^1\right)$. The conclusion follows immediately.
\end{proof}

For now on, we fix the solutions $(p_i,\Lambda_i)$ for BSDE \eqref{bsde1} and $(h_i,\eta_i)$ for BSDE \eqref{bsde2}. Based on them, we now introduce a non-homogeneous linear SDE: 
\begin{equation} \label{state4}
		\left\{\begin{aligned}
			\mathrm{d}{Z}^*_i = & \left( r{Z}^*_i+ \theta_i \rho_i\widehat{(\sigma\pi)}_i-\theta_i \widehat{(\rho \sigma \pi)}_i -\rho_i [\eta_i + ( \frac{\Lambda_i}{p_i}+\rho_i)({Z}^*_i+h_i)] \right) \dt \\
			& - \left( \eta_i + ( \frac{\Lambda_i}{p_i}+ \rho_i )({Z}^*_i+h_i) \right) \dw, \quad t \in[0, T],\\
			{Z}^*_i(0) &= z_i.
		\end{aligned}\right.
	\end{equation}
Note this SDE has unbounded coefficients, so its solvability is not immediately ready. 
\begin{lemma}\label{sxz2}
SDE \eqref{state4} admits a solution ${Z}^*_i \in L_{\mathbb{F}}^2\left(0, T ; \mathbb{R}^1\right)$.
\end{lemma}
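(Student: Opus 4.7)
The plan is to recognize that \eqref{state4} is a linear SDE in ${Z}^*_i$ with random, unbounded but BMO coefficients, and to solve it explicitly by the variation-of-constants formula, then check $L^2$ integrability via BMO estimates. Writing $\alpha \triangleq \frac{\Lambda_i}{p_i}+\rho_i$, the equation takes the form
\begin{equation*}
\dd{Z}^*_i = \bigl(a_1 {Z}^*_i + f_1\bigr)\dt + \bigl(a_2 {Z}^*_i + f_2\bigr)\dw,
\end{equation*}
with $a_1 = r - \rho_i\alpha$, $a_2 = -\alpha$, $f_1 = \theta_i\rho_i\widehat{(\sigma\pi)}_i-\theta_i\widehat{(\rho\sigma\pi)}_i - \rho_i\eta_i - \rho_i\alpha h_i$, and $f_2 = -\eta_i - \alpha h_i$.

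First I would verify that $\int_0^{\cdot}\alpha(s)\dw(s)$ is a BMO martingale: by the previous lemmas, $p_i\in L_{\mathbb{F}}^{\infty}(0,T;\mathbb{R}_{\gg 1})$ so $p_i^{-1}$ is essentially bounded, $\Lambda_i\in L_{\mathbb{F}}^{2,\mathrm{BMO}}(0,T;\mathbb{R}^1)$, and $\rho_i\in L_{\mathbb{F}}^{\infty}(0,T;\mathbb{R}^1)$, hence $\alpha\in L_{\mathbb{F}}^{2,\mathrm{BMO}}(0,T;\mathbb{R}^1)$. Define the fundamental solution
\begin{equation*}
\Phi(t)\triangleq\exp\!\left\{\int_0^t\!\bigl(r(s)-\rho_i(s)\alpha(s)-\tfrac12\alpha(s)^2\bigr)\ds - \int_0^t\!\alpha(s)\dw(s)\right\},
\end{equation*}
so that $\dd\Phi = a_1\Phi\dt + a_2\Phi\dw$ and $\Phi(0)=1$. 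By the reverse H\"older inequality for BMO stochastic exponentials (Kazamaki's theorem), both $\mathcal{E}(-\int\alpha\dw)$ and its inverse $\mathcal{E}(\int\alpha\dw)$ belong to $L^q(\Omega)$ uniformly in $t\in[0,T]$ for some $q>1$; combined with the essential boundedness of $r,\rho_i,\alpha^2\idd{\{\alpha\text{ bounded part}\}}$ and an energy-type bound on $\int_0^T\alpha^2\ds$ (again a consequence of BMO), this yields $\sup_{t\in[0,T]}\be[\Phi(t)^q + \Phi(t)^{-q}]<\infty$ for some $q>1$.

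Next, by the variation-of-constants formula the candidate solution is
\begin{equation*}
{Z}^*_i(t) = \Phi(t)\left[z_i + \int_0^t\!\Phi(s)^{-1}\bigl(f_1(s)-a_2(s)f_2(s)\bigr)\ds + \int_0^t\!\Phi(s)^{-1}f_2(s)\dw(s)\right].
\end{equation*}
A direct application of It\^o's formula confirms this process satisfies \eqref{state4}. The remaining task is to check $Z^*_i\in L_{\mathbb{F}}^2(0,T;\mathbb{R}^1)$. Note $f_2 = -\eta_i-\alpha h_i$ with $\eta_i\in L_{\mathbb{F}}^2$, $h_i\in S_{\mathbb{F}}^2$, and $\alpha$ BMO; similarly $f_1$ is built from $\eta_i$, $h_i$, $\widehat{(\rho\sigma\pi)}_i$ and $\widehat{(\sigma\pi)}_i$ (the last two being in $L_{\mathbb{F}}^2$ because $\pi_k\in\mathcal U$ and $\rho_k,\sigma_k$ are bounded).

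Finally I would estimate $\be[|{Z}^*_i(t)|^2]$ by splitting into three terms, applying the Cauchy–Schwarz/H\"older inequality with conjugate exponents matched to the $L^q$ bounds on $\Phi,\Phi^{-1}$, using the Burkholder–Davis–Gundy inequality for the stochastic integral piece, and using the energy inequality $\be[(\int_0^T\alpha^2\ds)^m]<\infty$ (valid for all $m\geq 1$ by BMO) to handle the $\alpha h_i$ and $\alpha f_2$ contributions. Integrating in $t$ then gives ${Z}^*_i\in L_{\mathbb{F}}^2(0,T;\mathbb{R}^1)$. The main technical obstacle lies precisely in this last step: because $\alpha$ is only BMO (not bounded), standard Gronwall-type arguments do not apply directly, and one must rely on the quantitative reverse H\"older property of BMO stochastic exponentials to close the integrability estimates.
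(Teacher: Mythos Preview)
Your variation-of-constants strategy is in principle sound, but the integrability step does not close with the tools you invoke. There are two separate obstructions. First, Kazamaki's reverse H\"older inequality only gives $\Phi,\Phi^{-1}\in L^q$ for $q$ in some interval $(1,q^*)$ where $q^*$ depends on the BMO norm of $\int_0^\cdot\alpha\,\dw$; nothing in the setup bounds that norm, so you cannot assume $q^*\ge 2$, let alone arbitrary $q$. Second, and more seriously, even granting all moments of $\Phi,\Phi^{-1}$, the inhomogeneous data $f_1,f_2$ contain $\eta_i$ and $h_i$, which are only in $L^2_{\mathbb F}$ and $S^2_{\mathbb F}$ respectively. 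Any H\"older/BDG splitting of, say, $\mathbb E\bigl[\Phi(t)^2\,|\int_0^t\Phi^{-1}\eta_i\,\dw|^2\bigr]$ forces higher moments onto $\int_0^T\Phi^{-2}\eta_i^2\,\ds$, and hence onto $\eta_i$, which are simply not available. The same problem arises with the $\alpha h_i$ piece. So the ``last step'' you flag as merely technical is in fact a genuine gap.

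The paper avoids all of this with a one-line substitution: set $Y_i\triangleq p_i(Z_i^*+h_i)$. A direct It\^o computation shows $Y_i$ satisfies the homogeneous linear SDE $\dd Y_i=-rY_i\,\dt-\rho_iY_i\,\dw$ with \emph{bounded} coefficients and initial value $p_i(0)(z_i+h_i(0))$, so $Y_i\in S^2_{\mathbb F}$ by standard theory. Then $Z_i^*=Y_i/p_i-h_i$, and since $p_i^{-1}$ is essentially bounded and $h_i\in S^2_{\mathbb F}$, one immediately gets $Z_i^*\in L^2_{\mathbb F}$. The point is that adding $h_i$ cancels every $L^2$-only inhomogeneity $\eta_i,\widehat{(\sigma\pi)}_i,\widehat{(\rho\sigma\pi)}_i$, and multiplying by $p_i$ absorbs the unbounded factor $\Lambda_i/p_i$ into the bounded diffusion coefficient $-\rho_i$. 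In your language this amounts to recognizing that $\Phi(t)=\dfrac{p_i(0)}{p_i(t)}\exp\!\bigl(-\int_0^t\rho_i\,\dw-\int_0^t(r+\tfrac12\rho_i^2)\,\ds\bigr)$ and that the inhomogeneous integrals in your formula telescope to $h_i(0)-\Phi(t)^{-1}h_i(t)$; but neither identity follows from generic BMO estimates.
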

\begin{proof}
	Clearly, the following SDE with bounded coefficients admits a unique strong solution \( Y_i \in S_{\mathbb{F}}^2(0,T; \mathbb{R}^1) \):
	\begin{equation} \label{lsde}
		\left\{\begin{aligned}
			& \mathrm{d}Y_i = -rY_i \dt - \rho_iY_i \dw, \quad t \in[0, T], \\
			& Y_i(0)= p_i(0)\left(z_i+h_i(0)\right).
		\end{aligned}\right.
	\end{equation}
Applying It\^{o}'s formula, one can see 
\begin{equation}\label{sxz9}
{Z}^*_i \triangleq \frac{Y_i}{p_i} - h_i,
\end{equation} 
is a solution in $ L_{\mathbb{F}}^2(0, T; \mathbb{R}^1)$ to the original SDE \eqref{state4}. Since the above linear transformation is invertible, the uniqueness follows. 
\end{proof}

Now we are ready to solve the stochastic LQ control problem \eqref{prob3}.
\begin{thm}\label{sxz6}
	The stochastic LQ control problem \eqref{prob3} is well-posed, with the unique optimal feedback control given by 
	\begin{equation*} 
		\pi_i^*(t, Z_i) = \theta_i \frac{\widehat{(\sigma\pi)}_i }{\sigma_i} - \frac{1}{\sigma_i} \left[ \eta_i + ( \frac{\Lambda_i}{p_i}+\rho_i)(Z_i+h_i) \right],
	\end{equation*}
	and its corresponding optimal cost functional given by
	\begin{equation*}
		\mathscr{J}_i[\pi_i^*]=p_i(0)|z_i+h_i(0)|^2,
	\end{equation*}
	where $\pi_i^*=\pi^*_i(t, Z_i^*)$ and $Z^*_i$ is determined by \eqref{state4}. 
\end{thm}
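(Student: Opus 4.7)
\begin{proofof}{Plan for \citethm{sxz6}}
The plan is to exploit a classical completion-of-squares identity tailored to the BSDEs \eqref{bsde1}--\eqref{bsde2}, so that the cost functional is written as a nonnegative integral plus an explicit constant. Set $U_i \triangleq Z_i + h_i$. Because $h_i(T)=-(d-\lambda)=-b$, the terminal condition reads $U_i(T)=Z_i(T)-b$, whence $\mathscr{J}_i(\pi_i)=\mathbb{E}\bigl[U_i(T)^2\bigr]$. A direct computation adding \eqref{state1} and \eqref{bsde2} eliminates the $\theta_i \widehat{(\rho\sigma\pi)}_i$ terms and produces
\begin{equation*}
\mathrm{d}U_i = \bigl\{r U_i + \rho_i\,\xi_i\bigr\}\,\dt + \xi_i\,\dw, \qquad \xi_i \triangleq \sigma_i \pi_i - \theta_i \widehat{(\sigma\pi)}_i + \eta_i.
\end{equation*}
Thus the whole control dependence of $U_i$ is packaged into the single process $\xi_i$, which is linear in $\pi_i$.

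Next I would apply It\^o's lemma to $p_i\,U_i^2$, using the BSDE \eqref{bsde1} for $p_i$. The cross term contributes $2\Lambda_i U_i \xi_i$, and after substituting the driver of $p_i$, the drift collapses to
\begin{equation*}
p_i\,\xi_i^2 + 2 \xi_i U_i \bigl(p_i \rho_i + \Lambda_i\bigr) + \frac{U_i^2\,(\rho_i p_i + \Lambda_i)^2}{p_i} = p_i\Bigl[\xi_i + U_i\bigl(\rho_i + \tfrac{\Lambda_i}{p_i}\bigr)\Bigr]^2,
\end{equation*}
which is the key completion-of-squares identity (and is well defined since $p_i\in L^\infty_{\mathbb{F}}(0,T;\mathbb{R}_{\gg 1})$). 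Integrating from $0$ to $T$, taking expectations (the justification for which I discuss below), and using $p_i(T)=1$ gives
\begin{equation*}
\mathscr{J}_i(\pi_i) = p_i(0)\,|z_i + h_i(0)|^2 + \mathbb{E}\!\int_0^T p_i(t)\Bigl[\xi_i(t) + U_i(t)\bigl(\rho_i(t) + \tfrac{\Lambda_i(t)}{p_i(t)}\bigr)\Bigr]^2 \dt.
\end{equation*}
Since $p_i>0$ almost surely, this immediately yields $\mathscr{J}_i(\pi_i) \ge p_i(0)|z_i+h_i(0)|^2$, with equality if and only if $\xi_i + U_i(\rho_i + \Lambda_i/p_i)=0$ a.e.\ a.s., which is exactly the asserted feedback $\pi_i^*(t,Z_i)$. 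Admissibility of the resulting candidate, i.e.\ $\pi_i^*\in\mathcal{U}$, follows from \citelem{sxz2}: the closed-loop SDE \eqref{state4} has a solution $Z_i^*\in L^2_{\mathbb{F}}$, and the identity $Z_i^*+h_i=Y_i/p_i$ in \eqref{sxz9} shows $Z_i^*+h_i\in S^2_{\mathbb{F}}$ (thanks to $p_i\in L^\infty_{\mathbb{F}}(\mathbb{R}_{\gg 1})$). Then $L^2$-bounds on $\eta_i,\widehat{(\sigma\pi)}_i$, $L^\infty$-bounds on $\rho_i$ and $1/\sigma_i$, together with the BMO property of $\Lambda_i/p_i$ combined with the product-of-$S^2_{\mathbb{F}}$-and-BMO-integrand inequality, give $\pi_i^*\in\mathcal{U}$.

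The main obstacle is the passage to expectation in the completion-of-squares identity: for a generic admissible $\pi_i\in\mathcal{U}$, the local martingale term $\int_0^\cdot (2 p_i U_i \xi_i + \Lambda_i U_i^2)\dw$ is not a priori a true martingale, and $U_i$ is only in $S^2_{\mathbb{F}}$ when $\pi_i$ is controlled. I would handle this by a standard localization: introduce stopping times $\tau_k\uparrow T$ reducing the local martingale, take expectations on $[0,\tau_k]$, then pass to the limit using the monotone convergence theorem on the nonnegative square term and the dominated/Fatou argument on the terminal $p_i(T\wedge\tau_k)U_i(T\wedge\tau_k)^2$, whose $L^1$-bound follows from $\pi_i\in\mathcal{U}$ and standard SDE estimates for \eqref{state1} giving $U_i\in S^2_{\mathbb{F}}$. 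Uniqueness of the minimizer is a consequence of the strict positivity of $p_i$ and of the identity, while the value $\mathscr{J}_i[\pi_i^*]=p_i(0)|z_i+h_i(0)|^2$ is obtained by substituting the optimal feedback, which drives the quadratic integrand to zero.
\end{proofof}
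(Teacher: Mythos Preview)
Your proposal is correct and follows essentially the same route as the paper: apply It\^{o}'s formula to $p_i(Z_i+h_i)^2$, observe that the drift collapses to the perfect square $p_i\sigma_i^2|\pi_i-\pi_i^*|^2$ (which is exactly your $p_i\bigl[\xi_i+U_i(\rho_i+\Lambda_i/p_i)\bigr]^2$), and read off both the optimal feedback and the optimal value. The paper's proof omits the localization argument for the stochastic integral, so your treatment is in fact more careful on that point.
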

\begin{proof}
One can check the pair $(\pi^*_i, Z^*_i)$ satisfies the state equation \eqref{state1}.
Applying Lemma \ref{sxz2}, we can get $\pi_i^*\in \mathcal{U}$.
	For any \( \pi_i \in \mathcal{U}\), let $Z_i$ denote the corresponding state determined by \eqref{state1}. 
	Applying It\^{o}'s formula to $p_i|Z_i+h_i|^2$,
	we have
	\begin{equation*} 
\begin{aligned}
		\mathrm{d}\left(p_i(Z_i+h_i)^2\right) =& p_i\sigma_i^2 |\pi_i-\pi_i^*|^2 \dt\\ 
&+ \left(|Z_i+h_i|^2\Lambda_i+2p(Z_i+h_i)(\sigma_i\pi_i-\theta_i \widehat{(\sigma\pi)}_i + \eta_i) \right)\dw.
\end{aligned}
	\end{equation*}
It yields that
	\begin{equation*}
		\begin{aligned}
			\mathscr{J}_i(\pi_i) =p_i(0)|z_i+h_i(0)|^2+ \mathbb{E}\left[ \int_0^T p_i\sigma_i^2 |\pi_i-\pi_i^*|^2 \dt \right].
		\end{aligned}
	\end{equation*}
	Since $(\pi^*_i, Z^*_i)$ satisfies the state equation \eqref{state1}, it implies that 
	\begin{equation*}
		\mathscr{J}_i[\pi_i^*]=p_i(0)|z_i+h_i(0)|^2.
	\end{equation*}
The above two equations confirm the optimality of 	$\pi_i^*$. 
\end{proof}

Next we turn to the constrained optimization problem \eqref{prob2}. We first establish the feasible condition for it, i.e. for a given target \( d \), there exists an admissible portfolio \( \pi_i \in \mathcal{U}\) satisfying \( \mathbb{E}[Z_i(T)] = d \).
\begin{thm}\label{sxz4}
	The constrained LQ Problem \eqref{prob2} is feasible for any $d \in \mathbb{R}^1$ if and only if
	\begin{equation} \label{feasible}
		\mathbb{E} \int_0^T\left|\rho_i(t) \psi(t)+\xi(t) \right|^2 \dt>0,
	\end{equation}
	where $(\psi,\xi) \in L_{\mathbb{F}}^{\infty}\left(0, T ; \mathbb{R}_{\gg 1}\right) \times L_{\mathbb{F}}^{2}(0, T ; \mathbb{R}^1)$ is the unique solution to the linear BSDE
	\begin{equation*}
		\left\{\begin{array}{l}
			\mathrm{d} \psi=-r \psi \dt+\xi\dw, \quad t \in[0, T], \\
			\psi(T)= 1.
		\end{array}\right.
	\end{equation*}
\end{thm}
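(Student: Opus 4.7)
The plan is to apply It\^{o}'s formula to the product $\psi(t) Z_i(t)$ and exploit the fact that $\psi$ is the natural dual variable that neutralizes the $r$-term in the $Z_i$-dynamics. Existence and uniqueness of $(\psi,\xi)$ in the stated spaces is a standard linear BSDE result; note that $\psi$ admits the explicit representation $\psi(t) = \mathbb{E}[\exp(-\int_t^T r(s)\ds)\,|\,\mathcal{F}_t]$, so boundedness away from $0$ and $\infty$ follows from $r\in L_{\mathbb{F}}^{\infty}(0,T;\mathbb{R}_+)$ in exact analogy with the derivation of \eqref{pit} for $p_i$.

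Since the $n-1$ strategies $\pi_k$, $k\neq i$, are fixed, the drift and diffusion of $Z_i$ in \eqref{state1} split into a part involving $\pi_i$ and a part determined by those fixed strategies. After applying It\^{o}'s formula to $\psi Z_i$ and taking expectation, the $r\psi Z_i - r\psi Z_i$ contributions cancel, yielding the affine representation
$$
\mathbb{E}[Z_i(T)] = \psi(0)z_i + \mathbb{E}\int_0^T (\rho_i\psi + \xi)\sigma_i\, \pi_i \dt - \theta_i \mathbb{E}\int_0^T \bigl[\psi\,\widehat{(\rho\sigma\pi)}_i + \xi\,\widehat{(\sigma\pi)}_i\bigr] \dt.
$$
The last two terms depend only on the fixed strategies and therefore amount to a single constant $C_i\in\mathbb{R}$, so that $\mathbb{E}[Z_i(T)] = C_i + L(\pi_i)$, where $L(\pi_i) \triangleq \mathbb{E}\int_0^T (\rho_i\psi + \xi)\sigma_i\, \pi_i\, \dt$ is a bounded linear functional on $\mathcal{U}$.

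Feasibility for every target $d\in\mathbb{R}$ is thus equivalent to the range of $L$ being all of $\mathbb{R}$. For the necessary direction, if \eqref{feasible} fails then $(\rho_i\psi+\xi)\sigma_i = 0$ $\dt\otimes\mathbb{P}$-a.e.\ (using that $\sigma_i$ is bounded away from $0$), hence $L\equiv 0$ and $\mathbb{E}[Z_i(T)] \equiv C_i$, precluding any $d\neq C_i$. For the sufficient direction, I will use the probing control $\pi_i = \alpha(\rho_i\psi+\xi)/\sigma_i$: since $\rho_i,\psi \in L_{\mathbb{F}}^{\infty}$, $\xi \in L_{\mathbb{F}}^2$, and $1/\sigma_i$ is essentially bounded (because $\sigma_i \in L_{\mathbb{F}}^{\infty}(0,T;\mathbb{R}_{\gg 1})$), one has $\pi_i\in\mathcal{U}$, and then $L(\pi_i) = \alpha\,\mathbb{E}\int_0^T |\rho_i\psi+\xi|^2\dt$; choosing $\alpha = (d-C_i)/\mathbb{E}\int_0^T |\rho_i\psi+\xi|^2\dt$ (well-defined by \eqref{feasible}) hits any prescribed $d$.

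The main, though mild, obstacle will be justifying that the stochastic integrals appearing after It\^{o}'s formula are true martingales rather than merely local ones, so that the expectation of the martingale part indeed vanishes. This will be handled by a routine localization argument exploiting that $Z_i$ is in $L_{\mathbb{F}}^2$ (admissibility of $\pi_i$ together with boundedness of $r$, $\rho_i\sigma_i$, $\sigma_i$ and of the fixed strategy aggregates), $\psi \in L_{\mathbb{F}}^{\infty}$, and $\xi\in L_{\mathbb{F}}^2$. Beyond this technicality, the argument is entirely linear-functional analysis on $\mathcal{U}$.
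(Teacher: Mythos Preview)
Your proposal is correct and follows essentially the same route as the paper. Both arguments hinge on the duality identity
\[
\mathbb{E}[Z_i(T)]=\text{(constant independent of }\pi_i)+\mathbb{E}\int_0^T(\rho_i\psi+\xi)\sigma_i\pi_i\,\dt,
\]
obtained via the BSDE $(\psi,\xi)$, and then reduce feasibility to surjectivity of the linear functional $L(\pi_i)=\mathbb{E}\int_0^T(\rho_i\psi+\xi)\sigma_i\pi_i\,\dt$. The paper reaches this identity by first decomposing $Z_i^\beta=\beta x+y$ into a homogeneous part $x$ (driven by $\pi_i$) and an inhomogeneous part $y$ (driven by the fixed strategies), whereas you apply It\^o's formula directly to $\psi Z_i$; the two computations are equivalent. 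Your probing control $\alpha(\rho_i\psi+\xi)/\sigma_i$ differs cosmetically from the paper's choice $\sigma_i(\rho_i\psi+\xi)$, but both are admissible thanks to $\sigma_i\in L_{\mathbb{F}}^\infty(0,T;\mathbb{R}_{\gg 1})$ and both give $L(\pi_i)$ proportional to the quantity in \eqref{feasible}. Your explicit mention of the localization needed for the martingale part is a point the paper leaves implicit.
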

\begin{proof}
	Since $\sigma_i\in L_{\mathbb{F}}^{\infty}\left(0, T ; \mathbb{R}_{\gg 1}\right)$, we claim that the feasible condition \eqref{feasible} is equivalent to
	\begin{equation} \label{equi}
		\mathbb{E} \int_0^T\left|\rho_i(t)\sigma_i(t) \psi(t)+\sigma_i(t)\xi(t) \right|^2 \dt>0.
	\end{equation}
	For any admissible $\pi_i\in \mathcal{U}$ and $\beta\in\mathbb{R}^1$, define the scaled portfolio $\pi_i^\beta\triangleq\beta \pi_i$. Denote by $Z_i^\beta$ the wealth process corresponding to $\pi_i^\beta$. Then for $t\in[0,T]$, $Z_i^\beta(t)=\beta x(t)+y(t)$, where \( x\) and \( y\) satisfy
	\begin{equation*}
		\left\{\begin{aligned}
			\mathrm{d}x&= \left( rx+ \rho_i \sigma_i\pi_i \right)\dt + \sigma_i\pi_i \dw, \quad t \in[0, T], \\
			\quad x(0)& = 0,
		\end{aligned}\right.
	\end{equation*}
and
	\begin{equation*}
		\left\{\begin{aligned}
			\mathrm{d}y &= \left[ ry -\theta_i \widehat{(\rho \sigma \pi)}_i \right] \dt - \theta_i\widehat{(\sigma\pi)}_i\dw, \quad t \in[0, T], \\
			\quad y(0)& = z_i.
		\end{aligned}\right.
	\end{equation*}
	Then we have $ \mathbb{E}[Z_i^\beta(T)] = \beta \mathbb{E}[x(T)] +\mathbb{E}[y(T)] $, where $ \mathbb{E}[y(T)]$ is independent of $\pi_i$ and
	\begin{equation*}
		\mathbb{E}[x(T)] = \mathbb{E} \int_{0}^{T}[\rho_i(t) \psi(t)+\xi(t)]\sigma_i(t)\pi_i(t)\dt.
	\end{equation*}
	
We first prove the ``if'' part. For $t\in[0,T]$, taking $\pi_i(t) = \sigma_i(t)[\rho_i(t) \psi(t)+\xi(t)]$ in above equality, we have
	\begin{equation*}
		\mathbb{E}[x(T)] = \mathbb{E} \int_0^T\left|\rho_i(t)\sigma_i(t) \psi(t)+\sigma_i(t)\xi(t) \right|^2 \dt>0.
	\end{equation*}
Hence for any $d \in \mathbb{R}^1$, there exists $\beta \in \mathbb{R}^1$ such that $\mathbb{E}[Z_i^\beta(T)] = d$, and thus $\pi_i^\beta\in \mathcal{U}$ satisfying \( \mathbb{E}[Z_i(T)]=\mathbb{E}[x(T)] +\mathbb{E}[y(T)] = d \).
	
For ``only if'' part, assume that problem \eqref{prob2} is feasible for any $d \in \mathbb{R}^1$, then there exists a $\pi_i\in \mathcal{U}$ such that
	\begin{equation*}
		\mathbb{E}[x(T)] = \mathbb{E} \int_{0}^{T}[\rho_i(t) \psi(t)+\xi(t)]\sigma_i(t)\pi_i(t)\dt \neq 0,
	\end{equation*}
which implies that \eqref{equi} is true.
\end{proof}

\begin{remark}
From Theorem \ref{sxz4}, we see that if the feasible condition \eqref{feasible} does not hold, there is only one feasible target $d$ for the constrained LQ Problem (\ref{prob2}). To avoid this trivial case, we always assume the feasible condition \eqref{feasible} holds from now on, which allows us to deal with the constraint \( \mathbb{E}[Z_i(T)] = d \) by the Lagrangian method.
\end{remark}

To move forward on the solvability of constrained optimization problem \eqref{prob2}, we decompose BSDE \eqref{bsde2} into two components as follows: 
\begin{equation*}
	h_i=\tilde{h}_i+(d-\lambda)\check{h}_i \ \ {\rm and}\ \ \eta_i=\tilde{\eta}_i+(d-\lambda)\check{\eta}_i,
\end{equation*}
where \( (\tilde{h}_i, \tilde{\eta}_i) \) and \( (\check{h}_i, \check{\eta}_i) \) solve the following two linear BSDEs
\begin{equation} \label{bsde3}
	\left\{\begin{array}{l}
		\mathrm{d}\tilde{h}_i=\left[r\tilde{h}_i + \theta_i \widehat{(\rho \sigma \pi)}_i - \theta_i \rho_i \widehat{(\sigma\pi)}_i + \rho_i \tilde{\eta}_i \right] \dt+\tilde{\eta}_i \dw, \quad t \in[0, T], \\
		\tilde{h}_i(T)= 0,
	\end{array}\right.
\end{equation}
and
\begin{equation} \label{bsde4}
	\left\{\begin{array}{l}
		\mathrm{d}\check{h}_i=\left(r\check{h}_i + \rho_i \check{\eta}_i\right) \dt+\check{\eta}_i \dw, \quad t \in[0, T], \\
		\check{h}_i(T)= -1.
	\end{array}\right.
\end{equation}

For the stochastic LQ control problem \eqref{prob3}, the unique optimal feedback control can be written as
\begin{equation*}
	\pi_i^* = \theta_i \frac{\widehat{(\sigma\pi)}_i }{\sigma_i} - \frac{1}{\sigma_i}\left[ \tilde{\eta}_i+(d-\lambda)\check{\eta}_i + ( \frac{\Lambda_i}{p_i}+\rho_i)(Z^*_i+\tilde{h}_i+(d-\lambda)\check{h}_i) \right],
\end{equation*}
with the corresponding cost functional
\begin{equation*}
	\mathscr{J}_i[\pi_i^*]=p_i(0)|z_i+h_i(0)|^2 = p_i(0)|z_i+\tilde{h}_i(0)+(d-\lambda)\check{h}_i(0)|^2.
\end{equation*}

By Proposition 3.5 in \cite{lim2002mean}, the inequality \( p_i(0) \check{h}_i(0) ^2 < 1 \) holds.
By Proposition 4.1 in \cite{lim2002mean}, BSDE \eqref{bsde4} admits a unique solution \( (\check{h}_i, \check{\eta}_i)\in L_{\mathbb{F}}^\infty(0, T; \mathbb{R}_{\ll -1}) \times L_{\mathbb{F}}^2(0, T; \mathbb{R}^1) \), and
	\begin{equation} \label{hit}
		\begin{aligned}
			\check{h}_i(t) = -\mathbb{E} \bigg[ \exp\bigg(\int_{t}^{T} -\rho_i \dw+ \int_{t}^{T}(-r- \frac{\rho_i^2}{2} ) \ds \bigg) \;\bigg{|}\; \mathcal{F}_t \bigg].
		\end{aligned}
	\end{equation}

The assumption on the feasible condition \eqref{feasible} allows us to solve the constrained control problem \eqref{prob2} by solving \eqref{prob3} for a fixed mean $\mathbb{E}[Z_i(T)] = d$. By the Lagrange duality theorem, 
the minimization problem \eqref{prob2} is equivalent to the unconstrained maximization problem
\begin{equation*}
	\min _{\pi_i(\cdot) \in \mathcal{U}, \mathbb{E}[Z_i(T)] = d} \operatorname{Var}(Z_i(T))=\max _{\lambda \in \mathbb{R}^1} \min _{\pi_i(\cdot) \in \mathcal{U}} J_i(\pi_i,\lambda).
\end{equation*}
In particular, by \eqref{sxz5}, \eqref{prob3} and Theorem \ref{sxz6}, we have
\begin{equation*}
\min _{\pi_i(\cdot) \in \mathcal{U}} J_i(\pi_i,\lambda)=	J_i(\pi_i^*,\lambda) = \mathscr{J}_i[\pi_i^*(\cdot)] - \lambda^2 = p_i(0)|z_i+\tilde{h}_i(0)+(d-\lambda)\check{h}_i(0)|^2 - \lambda^2.
\end{equation*}
Thanks to \( p_i(0) \check{h}_i(0) ^2 < 1 \), the maximum of \( \lambda\mapsto J_i(\pi_i^*, \lambda) \) is attained at the optimal Lagrange multiplier
\begin{equation*}
\lambda^* = \frac{p_i(0)\check{h}_i(0)\left(z_i + \tilde{h}_i(0) + \check{h}_i(0)d\right)}{p_i(0)|\check{h}_i(0)|^2 - 1},
\end{equation*}
which gives the optimal value of \eqref{prob2} as 
\begin{equation*}
	\frac{p_i(0)|z_i + \tilde{h}_i(0) + \check{h}_i(0)d|^2}{1 - p_i(0) |\check{h}_i(0)|^2}.
\end{equation*}

Finally, let us study the MV portfolio selection problem \eqref{prob1}. We only need to solve
\begin{equation*}
	\max _{d \in \mathbb{R}^1}\left(d - \frac{\gamma_i}{2} \cdot \frac{p_i(0)|z_i + \tilde{h}_i(0) + \check{h}_i(0)d|^2}{1 - p_i(0) |\check{h}_i(0)|^2}\right),
\end{equation*}
which attains its maximum
\begin{equation*}
	\frac{1 - p_i(0)|\check{h}_i(0)|^2}{2 \gamma_i p_i(0)|\check{h}_i(0)|^2} - \frac{z_i + \tilde{h}_i(0)}{\check{h}_i(0)},
\end{equation*}
at the optimal mean
\begin{equation*}
	d^* = \frac{1}{\gamma_i}\left( \frac{1}{p_i(0)|\check{h}_i(0)|^2} - 1 \right) - \frac{z_i + \tilde{h}_i(0)}{\check{h}_i(0)}.
\end{equation*}

In this way, we know
\begin{equation}\label{sxz8}
	d^* - \lambda^* = -\frac{z_i + \tilde{h}_i(0)}{\check{h}_i(0)} + \frac{1}{\gamma_i p_i(0)|\check{h}_i(0)|^2}.
\end{equation}
Therefore, for problem \eqref{prob1}, the linear optimal feedback control for agent $i$ is
\begin{multline} \label{control2}
		\pi_i^* = \theta_i \frac{\widehat{(\sigma\pi^*)}_i }{\sigma_i} - \frac{1}{\sigma_i} \bigg[ \tilde{\eta}_i+(-\frac{z_i+\tilde{h}_i(0)}{\check{h}_i(0)}+\frac{1}{\gamma_ip_i(0)|\check{h}_i(0)|^2})\check{\eta}_i \\
		+( \frac{\Lambda_i}{p_i}+\rho_i)\bigg(Z^*_i+\tilde{h}_i+(-\frac{z_i+\tilde{h}_i(0)}{\check{h}_i(0)}+\frac{1}{\gamma_ip_i(0)|\check{h}_i(0)|^2})\check{h}_i\bigg) \bigg].
\end{multline}

\section{Solving the Nash Equilibrium}\label{sxz12}

In Section \ref{sxz7}, we deal with the mena-variance problem by fixing the other $n-1$ agents' strategies, while in this section, we study the Nash equilibrium of the MV portfolio selection problem \eqref{prob1}. It means that we need to find out a vector portfolio strategy \( \boldsymbol{\pi^*}=(\pi^*_1, \pi^*_2, \cdots, \pi^*_n)^\top\in L_{\mathbb{F}}^2(0, T; \mathbb{R}^n) \) such that \eqref{control2} is satisfied for all $i=1,2,\cdots,n$.

Needless to say, it is much more complicated to solve the Nash equilibrium than to solve a single MV problem. Notice that the coefficients of BSDEs \eqref{bsde1} and BSDEs \eqref{bsde4} only depend on market parameters rather than portfolio strategies, but SDEs \eqref{state1}, BSDEs \eqref{bsde3} and linear optimal feedback controls \eqref{control2} constitute a coupled system. Hence the key point is to establish the well-posedness of this coupled system.

We start from decoupling SDEs \eqref{state1}. 
Due to \eqref{sxz8}, we have
\begin{equation*}
	h_i(0) = \tilde{h}_i(0) + \check{h}_i(0) \left( -\frac{z_i + \tilde{h}_i(0)}{\check{h}_i(0)} + \frac{1}{\gamma_i p_i(0)|\check{h}_i(0)|^2} \right) = -z_i + \frac{1}{\gamma_i p_i(0)\check{h}_i(0)}.
\end{equation*}
Rewritten SDEs \eqref{lsde} as (here and hereafter, we use $Y^*_i$ instead of $Y_i$ in the discussion of Nash equilibrium):
\[
\left\{
\begin{aligned}
	\mathrm{d}Y^*_i &= -rY^*_i\dt - \rho_iY^*_i\dw, \quad t \in [0, T], \\
	Y^*_i(0) &= \frac{1}{\gamma_i \check{h}_i(0)},
\end{aligned}
\right.
\]
with the explicit expression 
\begin{equation*}
	Y^*_i(t) = \frac{1}{\gamma_i \check{h}_i(0)} \exp\left( \int_0^t -\rho_i(s)\dw(s) + \int_0^t \left(-r(s) - \frac{\rho_i(s)^2}{2}\right)\ds \right), \quad t \in [0, T],
\end{equation*}
which only depends on market parameters, and is independent of portfolio strategies of all agents. 
Substituting \eqref{sxz9} into the optimal feedback controls \eqref{control2}, we have
\begin{equation} \label{control3}
	\pi_i^* = \theta_i \frac{\widehat{(\sigma\pi^*)}_i }{\sigma_i} - \frac{1}{\sigma_i} \left[ \tilde{\eta}_i+\left(-\frac{z_i+\tilde{h}_i(0)}{\check{h}_i(0)}+\frac{1}{\gamma_ip_i(0)|\check{h}_i(0)|^2}\right)\check{\eta}_i + ( \frac{\Lambda_i}{p_i}+\rho_i)\frac{Y^*_i}{p_i} \right].
\end{equation}

After decoupling SDEs \eqref{state1} from the coupled system, we obtain the system of linear equations \eqref{control3} of unknown variables \( \pi_i^* \), \( i=1,2,\cdots,n\), coupled with BSDEs \eqref{bsde3}. Next, we further decouple BSDEs \eqref{bsde3} by giving explicit forms of \( \pi_i^* \), $i=1,2,\cdots,n$, with the help of \eqref{control3}. 

For this, define two constants
\begin{equation*}
	\Psi \triangleq \sum_{i=1}^{n} \frac{\theta_i}{n - 1 + \theta_i} \in [0, 1], \quad \hat{\gamma} \triangleq \sum_{i=1}^{n} \frac{1}{\gamma_i},
\end{equation*}
and three average quantities
\begin{equation*}
	\overline{\sigma\pi^*} \triangleq \frac{1}{n}\sum_{k=1}^n \sigma_k\pi^*_k, \quad \bar{x} \triangleq \frac{1}{n}\sum_{k=1}^n x_k, \quad \bar{\rho} \triangleq \frac{1}{n}\sum_{i=1}^{n}\rho_i.
\end{equation*}
Substitute \eqref{control3} into \( \overline{\sigma\pi^*} \), and it yields
\begin{equation}\label{sxz10}
	\sigma_i\pi^*_i = \theta_i \frac{n\overline{\sigma\pi^*} - \sigma_i\pi^*_i}{n - 1} - \phi_i,
\end{equation}
where
\begin{equation*}
	\phi_i \triangleq \tilde{\eta}_i + c_i \tilde{h}_i(0) + f_i,
\end{equation*}
with
\begin{equation*}
	c_i = -\frac{\check{\eta}_i}{\check{h}_i(0)}, \quad f_i = \left(-\frac{z_i}{\check{h}_i(0)} + \frac{1}{\gamma_i p_i(0)|\check{h}_i(0)|^2}\right)\check{\eta}_i + \left( \frac{\Lambda_i}{p_i} + \rho_i \right)\frac{Y^*_i}{p_i}.
\end{equation*}
The equality \eqref{sxz10} implies
\begin{equation}\label{sxz11}
	\sigma_i\pi^*_i = \frac{n\theta_i}{n - 1 + \theta_i} \overline{\sigma\pi^*} - \frac{\phi_i}{1 + \frac{\theta_i}{n - 1}}.
\end{equation}
Sum up all agents, and then divide by \( n \). It turns out that
\begin{equation}\label{average1}
	\overline{\sigma\pi^*} = \Psi \overline{\sigma\pi^*} - \frac{1}{n}\sum_{i=1}^{n} \frac{\phi_i}{1 + \frac{\theta_i}{n - 1}}.
\end{equation}
So it can be seen from above that the solvability of \eqref{control3} depends on the value of $\Psi$. Then we will discuss the solvability of \eqref{control3} in the usual case \(\Psi < 1\) and in the marginal case \(\Psi = 1\), respectively.

\subsection{The usual case}
For \(\Psi < 1\), 
substituting the average control \(\overline{\sigma\pi^*}\) obtained in \eqref{average1} into \eqref{sxz11}, we have
\begin{equation} \label{construction}
	\sigma_i\pi^*_i = -\frac{1}{1-\Psi} \frac{n\theta_i}{n-1+\theta_i}\sum_{i=1}^{n}\frac{\phi_i}{n+\frac{n\theta_i}{n-1}}-\frac{\phi_i}{1+\frac{\theta_i}{n-1}}.
\end{equation}
Then the terms \(\theta_i \widehat{(\rho \sigma \pi^*)}_i - \theta_i \rho_i \widehat{(\sigma \pi^*)}_i\) can be given explicitly as a linear combination of \(\tilde{h}_j\), \(\tilde{\eta}_j\) and \(\tilde{h}_j(0)\) for $j=1,2,\cdots,n$.

Set $\tilde{\boldsymbol{h}} \triangleq (\tilde{h}_1,\tilde{h}_2,\cdots,\tilde{h}_n)^\top$ and $\tilde{\boldsymbol{\eta}} \triangleq (\tilde{\eta}_1,\tilde{\eta}_2,\cdots,\tilde{\eta}_n)^\top$. BSDEs \eqref{bsde3} can be rewritten as
\begin{equation} \label{lbsde1}
	\left\{\begin{array}{l}
		\mathrm{d}\tilde{\boldsymbol{h}}=-\left[A\tilde{\boldsymbol{h}} + B \tilde{\boldsymbol{\eta}} + C\tilde{\boldsymbol{h}}(0) +F \right] \dt+\tilde{\boldsymbol{\eta}} \dw, \quad t \in[0, T], \\
		\tilde{\boldsymbol{h}}(T)= 0,
	\end{array}\right.
\end{equation}
where \(A\), \(B\), and \(C\) are coefficient matrices, 
and \(F\) is a coefficient vector. 
Precisely, for $i,j=1,2,\cdots,n$,
\begin{equation*}
A_{ij}\triangleq\left\{
\begin{aligned}
0,\ \ \ \ \ &i\neq j,\\
-r,\ \ \ &i=j,
\end{aligned}
\right.\ \ \ \ \
B_{ij} \triangleq\left\{
\begin{aligned}
\theta_iM_{ij},\ \ \ \ \ \ \ \ \ &i\neq j,\\
\theta_i M_{ii} - \rho_i,\ \ \ &i=j,
\end{aligned}
\right.
\end{equation*}
\begin{equation*}
C_{ij} \triangleq\left\{
\begin{aligned}
\theta_i M_{ij} c_j,\ \ \ &i\neq j,\\
\theta_i M_{ii} c_i,\ \ \ &i=j,
\end{aligned}
\right.\ \ \ \ \ F_i\triangleq \theta_i (M_{ii}f_i+\sum_{i\neq j}M_{ij}f_j ),
\end{equation*}
with
\begin{equation*}
M_{ij} \triangleq\left\{
\begin{aligned}
\frac{1}{n-1}\frac{1}{1-\Psi}\sum_{k\neq i}\frac{(n-1)\theta_k(\rho_k-\rho_i)}{(n-1+\theta_k)(n-1+\theta_j)} - \frac{(n-1)(\rho_j-\rho_i))}{n-1+\theta_j},\ \ \ &i\neq j,\\
\frac{1}{n-1}\frac{1}{1-\Psi}\sum_{k\neq i}\frac{(n-1)\theta_k(\rho_k-\rho_i)}{(n-1+\theta_k)(n-1+\theta_i)},\ \ \ \ \ \ \ \ \ \ \ \ \ \ \ \ \ \ \ \ \ \ \ \ \ \ \ \ \ &i=j.
\end{aligned}
\right.
\end{equation*}
Obviously $A \in L_{\mathbb{F}}^{\infty}(0, T ; \mathbb{R}^{n\times n})$, $B \in L_{\mathbb{F}}^{\infty}\left(0, T ; \mathbb{R}^{n\times n} \right)$, $C \in L_{\mathbb{F}}^{2}\left(0, T ; \mathbb{R}^{n\times n} \right)$ and $F \in L_{\mathbb{F}}^{2}\left(0, T ; \mathbb{R}^n \right)$.

Note that \eqref{lbsde1} is a new type of BSDE since its driver depends on $\tilde{\boldsymbol{h}}(0)$. In Lemma \ref{newbsde} in Appendix~\ref{appendix}, the solvability of an extended class of general nonlinear BSDEs in the solution space $S_{\mathbb{F}}^2(0, T; \mathbb{R}^n) \times L_{\mathbb{F}}^2(0, T; \mathbb{R}^{n})$ is established by the fixed-point method for sufficiently small \(T > 0\). Given $\tilde{\boldsymbol{h}}(0)$, due to the linear structure of BSDE \eqref{lbsde1}, its explicit solution can be obtained for any given $T>0$.
For this, we introduce an SDE with solution in \( L_{\mathbb{F}}^2(0, T; \mathbb{R}^{n \times n}) \):
\begin{equation*}
\left\{
\begin{aligned}
	\mathrm{d}\Gamma &= \Gamma \left[A \dt + B \dw\right], \quad t \in[0, T],\\
\Gamma(0) &= I_n.
\end{aligned}
\right.
\end{equation*}
For the solution $\Gamma$ to the above SDE, its inverse flow $\Gamma^{-1}$ satisfies another SDE:
\begin{equation*}
\left\{
\begin{aligned}
	\mathrm{d} \Gamma^{-1}&=\Gamma^{-1}[ \left(-A+B^2\right) \dt- B \dw ], \quad t \in[0, T],\\
\Gamma^{-1}(0) &= I_n.
\end{aligned}
\right.
\end{equation*}
Applying It\^{o}'s formula to $\Gamma \tilde{\boldsymbol{h}}$, we have
\begin{equation*}
	\mathrm{d}(\Gamma \tilde{\boldsymbol{h}}) = -\Gamma \left(C \tilde{\boldsymbol{h}}(0) + F \right) \dt + \Gamma(\tilde{\boldsymbol{\eta}}+B \tilde{\boldsymbol{h}}) \dw.
\end{equation*}
Noticing \( \tilde{\boldsymbol{h}}(T) = 0 \), we have
\begin{equation*}
	\tilde{\boldsymbol{h}}(t) = \Gamma^{-1}(t)\mathbb{E}\left[ \int_t^T \Gamma(s) \left(C(s) \tilde{\boldsymbol{h}}(0) + F(s)\right) \ds \,\bigg|\, \mathcal{F}_t \right].
\end{equation*}
In particular,
\begin{equation*}
	\tilde{\boldsymbol{h}}(0) = \mathbb{E}\left[ \int_0^T \Gamma(s) \left( C(s) \tilde{\boldsymbol{h}}(0) + F(s) \right) \ds \right].
\end{equation*}
Set $K \triangleq \mathbb{E}\big[ \int_0^T \Gamma(s) C(s) \ds \big]$ and $ D\triangleq \mathbb{E}\big[ \int_0^T \Gamma(s) F(s) \ds \big] $, then 
\begin{equation*}
	(I_n - K) \tilde{\boldsymbol{h}}(0) = D.
\end{equation*}
The following result studies the well-posedness of BSDE \eqref{lbsde1} in the usual case. 
\begin{thm} \label{equilibrium}
	Assume \(\Psi < 1\). Then the well-posedness of BSDE \eqref{lbsde1} can be classified into the following situations.
	\begin{enumerate}
		\item Unique Solution: If \(I_n - K\) is invertible, there exists a unique consistent initial vector $\tilde{\boldsymbol{h}}(0) = (I_n - K)^{-1} D$. Consequently, BSDE \eqref{lbsde1} admits a unique solution \((\tilde{\boldsymbol{h}}, \tilde{\boldsymbol{\eta}}) \in S_{\mathbb{F}}^2(0, T; \mathbb{R}^n) \times L_{\mathbb{F}}^2(0, T; \mathbb{R}^{n})\).
		\item Infinitely Many Solutions: If \(I_n - K\) is singular and \(D \in \text{Im}(I_n - K)\), there exist infinitely many solutions \((\tilde{\boldsymbol{h}}, \tilde{\boldsymbol{\eta}})\in S_{\mathbb{F}}^2(0, T; \mathbb{R}^n) \times L_{\mathbb{F}}^2(0, T; \mathbb{R}^{n})\). These solutions are characterized by initial vectors $\tilde{\boldsymbol{h}}(0)$ in the affine space
		\begin{equation*}
			\tilde{\boldsymbol{h}}(0) \in \ker(I_n - K) + (I_n - K)^{\dagger}D,
		\end{equation*}
		where \((I_n - K)^{\dagger}\) represents the Moore-Penrose pseudoinverse of \(I_n - K\).
		
		\item No Solution: If \(I_n - K\) is singular but \(D \notin \text{Im}(I_n - K)\), no solution exists to BSDE \eqref{lbsde1}.
		
	\end{enumerate}
\end{thm}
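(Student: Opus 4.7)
The plan is to reduce the well-posedness of BSDE (\ref{lbsde1}) to the solvability of the finite-dimensional linear system $(I_n - K)\tilde{\boldsymbol{h}}(0) = D$ already derived just above the theorem statement. The crucial observation is that once the initial value $\tilde{\boldsymbol{h}}(0)$ is fixed as a deterministic vector $v \in \mathbb{R}^n$, the term $C\tilde{\boldsymbol{h}}(0)$ becomes the known process $Cv$, which lies in $L_{\mathbb{F}}^2(0,T;\mathbb{R}^n)$ because $C \in L_{\mathbb{F}}^2(0,T;\mathbb{R}^{n\times n})$. Consequently BSDE (\ref{lbsde1}) collapses into a standard linear BSDE with essentially bounded generator coefficients $A, B$ and source term $Cv + F \in L_{\mathbb{F}}^2(0,T;\mathbb{R}^n)$, so classical linear BSDE theory yields a unique solution $(\tilde{\boldsymbol{h}}^v, \tilde{\boldsymbol{\eta}}^v) \in S_{\mathbb{F}}^2(0,T;\mathbb{R}^n) \times L_{\mathbb{F}}^2(0,T;\mathbb{R}^n)$ for every $T>0$.

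First, for each candidate $v \in \mathbb{R}^n$ I would solve this auxiliary linear BSDE and then repeat the It\^{o} computation on $\Gamma \tilde{\boldsymbol{h}}^v$ already performed in the excerpt. This gives the explicit representation
\begin{equation*}
\tilde{\boldsymbol{h}}^v(t) = \Gamma^{-1}(t)\,\mathbb{E}\!\left[\int_t^T \Gamma(s)\bigl(C(s)v + F(s)\bigr)\,\ds \,\bigg|\, \mathcal{F}_t\right],
\end{equation*}
and in particular the scalar identity $\tilde{\boldsymbol{h}}^v(0) = Kv + D$. A pair $(\tilde{\boldsymbol{h}}^v, \tilde{\boldsymbol{\eta}}^v)$ obtained this way solves the \emph{original} nonstandard BSDE (\ref{lbsde1}) if and only if the guess $v$ is self-consistent in the sense that $\tilde{\boldsymbol{h}}^v(0) = v$, i.e. $(I_n - K)v = D$. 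This equivalence is bidirectional: the ``$\Longrightarrow$'' direction was essentially the derivation that led to the constraint, while the ``$\Longleftarrow$'' direction is immediate because $v = \tilde{\boldsymbol{h}}^v(0)$ forces $C v = C\tilde{\boldsymbol{h}}^v(0)$ pathwise in the driver.

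Second, I would translate the solvability of the linear equation $(I_n - K) v = D$ into the three cases stated. When $I_n - K$ is invertible, the unique consistent initial value is $v = (I_n - K)^{-1} D$ and the corresponding auxiliary BSDE produces the unique solution of (\ref{lbsde1}). When $I_n - K$ is singular with $D \in \text{Im}(I_n - K)$, the constraint has general solution $v \in \ker(I_n - K) + (I_n - K)^{\dagger} D$, and by the equivalence above each such $v$ yields a solution of (\ref{lbsde1}); distinct values of $v$ give distinct solutions since they already differ at $t=0$, hence infinitely many solutions exist. When $D \notin \text{Im}(I_n - K)$, no $v$ can satisfy the constraint, and the equivalence rules out any solution.

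The main obstacle, and the only real technical point, is the passage from the formal derivation of the consistency equation to the rigorous equivalence. It requires (i) invoking the invertibility and integrability of $\Gamma$ and $\Gamma^{-1}$, which follows from the essential boundedness of $A$ and $B$ together with standard matrix-SDE estimates, and (ii) confirming that the conditional-expectation representation together with consistency $v = \tilde{\boldsymbol{h}}^v(0)$ really produces an $S_{\mathbb{F}}^2 \times L_{\mathbb{F}}^2$ pair that solves the \emph{nonstandard} BSDE (\ref{lbsde1}) rather than merely the auxiliary one. Both are routine consequences of the linear structure, bounded coefficients, and Doob's inequality; the classification into the three cases then reduces to elementary linear algebra involving the range and kernel of $I_n - K$.
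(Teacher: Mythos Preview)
Your proposal is correct and follows essentially the same approach as the paper: the paper derives the consistency equation $(I_n-K)\tilde{\boldsymbol{h}}(0)=D$ via the $\Gamma$-representation just before the theorem statement and then states the trichotomy without further proof, leaving the linear-algebra classification and the bijection between consistent initial values $v$ and solutions of the nonstandard BSDE implicit. You have simply made those implicit steps explicit and rigorous.
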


\begin{remark}
	For sufficiently small \(T > 0\), the norm \(\|K\|_{\infty}\) is also sufficiently small due to the integral structure of \(K\). This guarantees the invertibility of \(I_n - K\) and a unique solution to BSDE \eqref{lbsde1}. This is consistent with Lemma \ref{newbsde} which assets that a unique solution to BSDE \eqref{newbsde} (a generalized nonlinear form of BSDE \eqref{lbsde1}) exists for sufficiently small $T>0$.
\end{remark}

Next, we establish the connection between BSDE \eqref{lbsde1} and Nash equilibrium.
\begin{thm} \label{connection}
Assume \(\Psi < 1\). Then there exists a one-to-one correspondence between Nash equilibrium strategies and the solutions to BSDE \eqref{lbsde1}.
\end{thm}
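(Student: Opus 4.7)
The plan is to set up mutually inverse maps between the set of Nash equilibria and the set of solutions $(\tilde{\boldsymbol{h}},\tilde{\boldsymbol{\eta}})\in S_{\mathbb{F}}^2(0,T;\mathbb{R}^n)\times L_{\mathbb{F}}^2(0,T;\mathbb{R}^n)$ of BSDE \eqref{lbsde1}, and then verify that both maps are well-defined and invert each other. The underlying correspondence is the identity \eqref{construction}, which both constructs $\pi_i^{*}$ out of $(\tilde h_i,\tilde\eta_i,\tilde h_i(0))$ and, read in reverse, determines $(\tilde h_i,\tilde\eta_i)$ via \eqref{bsde3} once $\{\pi_k^{*}\}_{k\ne i}$ is fixed.

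For the forward direction, fix a Nash equilibrium $\boldsymbol{\pi}^{*}=(\pi_1^{*},\dots,\pi_n^{*})^\top$. Since $\pi_i^{*}$ is agent $i$'s optimal response against the opponents' fixed strategies $\{\pi_k^{*}\}_{k\ne i}$, the analysis of \citesec{sxz7} (Theorem \ref{sxz6} and the Lagrange-dual reduction leading to \eqref{control2}) forces $\pi_i^{*}$ to be the feedback control \eqref{control2}; after the decoupling substitution \eqref{sxz9}, this is equivalent to \eqref{control3}. With these representations substituted into the arithmetic mean $\overline{\sigma\pi^{*}}$, equation \eqref{average1} becomes a linear equation for $\overline{\sigma\pi^{*}}$ whose coefficient is $1-\Psi$. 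Under the assumption $\Psi<1$, we can solve uniquely for $\overline{\sigma\pi^{*}}$ and then recover each $\sigma_i\pi_i^{*}$ in the closed form \eqref{construction}. Inserting \eqref{construction} into the driver of BSDE \eqref{bsde3} eliminates the dependence on the individual strategies and produces exactly the linear system \eqref{lbsde1} with the coefficient matrices $A,B,C$ and the vector $F$ listed after it. Hence the tuple $(\tilde{\boldsymbol{h}},\tilde{\boldsymbol{\eta}})$ built from the $n$ equations \eqref{bsde3} solves \eqref{lbsde1}.

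For the reverse direction, let $(\tilde{\boldsymbol{h}},\tilde{\boldsymbol{\eta}})$ solve \eqref{lbsde1} and define $\pi_i^{*}$ by \eqref{construction}. Admissibility $\pi_i^{*}\in\mathcal{U}$ needs to be checked: the initial value $\tilde h_i(0)$ is a deterministic constant; $c_i=-\check\eta_i/\check h_i(0)$ is $L_{\mathbb{F}}^{2}$ since $\check h_i(0)$ is a nonzero constant and $\check\eta_i\in L_{\mathbb{F}}^{2}$; and $f_i\in L_{\mathbb{F}}^{2}$ because $\Lambda_i/p_i\in L_{\mathbb{F}}^{2,\mathrm{BMO}}$, $\rho_i\in L_{\mathbb{F}}^{\infty}$, $p_i$ is bounded away from $0$ and $\infty$, and $Y_i^{*}\in S_{\mathbb{F}}^{2}$. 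Combined with $\tilde\eta_i\in L_{\mathbb{F}}^{2}$ and $\sigma_i^{-1}\in L_{\mathbb{F}}^{\infty}$, this gives $\pi_i^{*}\in\mathcal{U}$. To verify the Nash property, fix $i$ and any deviation $\pi_i\in\mathcal{U}$ with opponents playing $\{\pi_k^{*}\}_{k\ne i}$. Then BSDE \eqref{bsde3} is a standard linear BSDE whose unique solution is, by construction, the $i$-th coordinate of $(\tilde{\boldsymbol{h}},\tilde{\boldsymbol{\eta}})$; Theorem \ref{sxz6} then identifies the unique best response with the feedback in \eqref{control2}, which by \eqref{construction} equals the prescribed $\pi_i^{*}$. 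Thus $\boldsymbol{\pi}^{*}$ is a Nash equilibrium.

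One-to-oneness follows from the fact that both maps are governed by the same relation \eqref{construction}: starting from $\boldsymbol{\pi}^{*}$, producing $(\tilde{\boldsymbol{h}},\tilde{\boldsymbol{\eta}})$ via \eqref{bsde3}, and then reconstructing strategies via \eqref{construction} returns $\boldsymbol{\pi}^{*}$; starting from $(\tilde{\boldsymbol{h}},\tilde{\boldsymbol{\eta}})$, constructing $\boldsymbol{\pi}^{*}$ via \eqref{construction}, and then solving \eqref{bsde3} recovers $(\tilde{\boldsymbol{h}},\tilde{\boldsymbol{\eta}})$ by uniqueness of the linear BSDE. The main obstacle is the admissibility step in the reverse direction: one must carefully combine the $L^{2,\mathrm{BMO}}$ regularity of $\Lambda_i$, the two-sided bounds on $p_i$ and $\sigma_i$, the $S_{\mathbb{F}}^{2}$-regularity of the explicit process $Y_i^{*}$, and the $L_{\mathbb{F}}^{2}$-regularity of $\tilde\eta_i$ and $\check\eta_i$ to conclude $\phi_i\in L_{\mathbb{F}}^{2}$ and hence $\pi_i^{*}\in\mathcal{U}$, as this is what allows the identification of \eqref{construction} with the genuine optimal feedback of Theorem \ref{sxz6}.
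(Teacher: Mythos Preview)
Your proof is correct and follows essentially the same three-step scheme as the paper's own proof: (i) from a Nash equilibrium, use optimality and the decoupling \eqref{sxz9}--\eqref{control3} to recover \eqref{construction} and hence a solution of \eqref{lbsde1}; (ii) from a solution of \eqref{lbsde1}, define $\boldsymbol{\pi}^*$ via \eqref{construction}, check admissibility, and verify the best-response property; (iii) observe that the two constructions invert each other. You supply more detail than the paper on the admissibility of $\pi_i^{*}$ and on why the $i$-th component of the BSDE solution coincides with the unique solution of \eqref{bsde3}, but the logical architecture is the same.
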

\begin{proof}
Step 1. Nash Equilibrium $\Rightarrow$ Solution to BSDE \eqref{lbsde1}: Assume that there exists a Nash equilibrium strategy \( \boldsymbol{\pi^*}=(\pi^*_1, \pi^*_2, \cdots, \pi^*_n)^\top\in L_{\mathbb{F}}^2(0, T; \mathbb{R}^n) \). By Definition \ref{nashdefinition}, $\boldsymbol{\pi^*}$ satisfies the coupled system composed of SDEs \eqref{state1}, BSDEs \eqref{bsde3} and linear optimal feedback controls \eqref{control2}. Decoupling SDEs \eqref{state1} as what we do at the beginning of Section \ref{sxz12}, we transform the coupled system to a simpler one composed of \eqref{bsde3} and \eqref{control3}. Then, substituting the equilibrium strategy $\boldsymbol{\pi^*}$ into \eqref{control3}, we solve \eqref{control3} to obtain \(\tilde{h}_j\), \(\tilde{\eta}_j\) and \(\tilde{h}_j(0)\) for $j=1,2,\cdots,n$. Consequently, the vector-valued linear BSDE \eqref{lbsde1} with a solution $(\tilde{\boldsymbol{h}},\tilde{\boldsymbol{\eta}})$ follows from BSDEs \eqref{bsde3}, where $\tilde{\boldsymbol{h}}=(\tilde{h}_1,\tilde{h}_2,\cdots,\tilde{h}_n)^\top$ and $\tilde{\boldsymbol{\eta}}=(\tilde{\eta}_1,\tilde{\eta}_2,\cdots,\tilde{\eta}_n)^\top$.
	
Step 2. Solution to BSDE \eqref{lbsde1} $\Rightarrow$ Nash Equilibrium: Assume that BSDE \eqref{lbsde1} admits a solution $ (\tilde{\boldsymbol{h}},\tilde{\boldsymbol{\eta}})$. The strategies \(\pi^*_i\), $i=1,2,\cdots,n$ can be explicitly constructed by \eqref{construction}. The construction of $\boldsymbol{\pi^*}=(\pi^*_1, \pi^*_2, \cdots, \pi^*_n)^\top$ guarantees that it belongs to $L_{\mathbb{F}}^2(0, T; \mathbb{R}^n)$ and satisfies the coupled system composed of SDEs \eqref{state1}, BSDEs \eqref{bsde3} and linear optimal feedback controls \eqref{control2}, and thus $\boldsymbol{\pi^*}$ is a Nash equilibrium.
	
Step 3. Bijectivity: With a known Nash equilibrium, we can construct a solution to BSDE \eqref{lbsde1} by Step 1, and with this solution to BSDE \eqref{lbsde1}, we can retrieve the Nash equilibrium by Step 2. Therefore, the correspondence is invertible.
\end{proof}

We next introduce two new assumptions on the market parameters for further discussions. 
\begin{assmp} \label{rho}
	The Sharpe ratios of all risky assets are identical, but not identical to $0$, i.e.
	\begin{equation*}
		\rho_i(t)=\rho(t)\not\equiv0, \quad \text{for all } i=1,2,\cdots,n, \text { and } \ t\in[0,T].
	\end{equation*}
\end{assmp}
\begin{assmp} \label{deter}
	None of the Sharpe ratios $\rho_i$, $i=1,2,\cdots,n$, is identical to $0$, and the interest rate $r$ are all deterministic processes.
\end{assmp}

\begin{remark}
	Under Assumption \ref{rho}, $ \widehat{(\rho \sigma \pi)}_i - \rho_i \widehat{(\sigma\pi)}_i = 0$ for $i=1,2,\cdots,n$, and BSDEs \eqref{bsde3} admits a unique solution $(\tilde{h}_i, \tilde{\eta}_i)\equiv(0,0)$.
\end{remark}

\begin{remark}
	Under Assumption \ref{deter}, BSDE \eqref{bsde1} admits a unique solutions
$$\left(p_i\left(t\right),\Lambda_i\left(t\right)\right)=\left( \exp\left( \int_t^T \left(2r(s) - |\rho_i(s)|^2\right)\ds \right), 0 \right),
$$
and BSDE \eqref{bsde4} admits a unique solution $\left(\check{h}_i\left(t\right), \check{\eta}_i\left(t\right)\right)=(- \exp\{ \int_{t}^{T}(-r(s) ) \ds \} ,0)$.
\end{remark}

\begin{remark}\label{sxz19}
	Under Assumption \ref{deter}, since $r$ is deterministic, \( \xi\equiv 0 \) and \( \psi > 0 \). Thus \eqref{feasible} also holds due to \( \rho_i \neq 0 \), $i=1,2,\cdots,n$.
\end{remark}

\begin{thm}\label{sxz18}
	Assume Assumption \ref{rho} and $\Psi < 1$. Then there exists a unique Nash equilibrium.
\end{thm}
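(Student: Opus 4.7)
The plan is to combine the bijection between Nash equilibria and solutions of BSDE \eqref{lbsde1} established in Theorem \ref{connection} with the classification provided by Theorem \ref{equilibrium}. Under Assumption \ref{rho} the market becomes symmetric in Sharpe ratio, so BSDE \eqref{lbsde1} degenerates dramatically and the situation should fall squarely into Case 1 of Theorem \ref{equilibrium}, which yields unique solvability essentially for free.

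The first step is to inspect each coefficient of BSDE \eqref{lbsde1} under the hypothesis $\rho_i \equiv \rho$. Every entry of the matrix $M$ is a linear combination of differences $\rho_k-\rho_i$ and $\rho_j-\rho_i$, so Assumption \ref{rho} forces $M\equiv 0$ identically. This produces three simultaneous consequences: $C\equiv 0$ (since $C_{ij}=\theta_i M_{ij}c_j$), $F\equiv 0$ (since $F_i=\theta_i(M_{ii}f_i+\sum_{j\neq i}M_{ij}f_j)$), and $B(t)=-\rho(t)I_n$; the matrix $A(t)=-r(t)I_n$ was already diagonal. Consequently BSDE \eqref{lbsde1} decouples into $n$ identical scalar linear BSDEs
\begin{equation*}
\mathrm{d}\tilde{h}_i = [r\tilde{h}_i + \rho\tilde{\eta}_i]\dt + \tilde{\eta}_i \dw, \qquad \tilde{h}_i(T)=0,
\end{equation*}
each of which admits the unique solution $(\tilde{h}_i,\tilde{\eta}_i)\equiv(0,0)$ by standard linear BSDE theory with bounded coefficients.

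Equivalently, and this is the route I would present in the manuscript, the matrix $K$ and vector $D$ defined just before Theorem \ref{equilibrium} both vanish since their integrands do, so $I_n-K=I_n$ is trivially invertible and Case 1 applies, giving the unique initial value $\tilde{\boldsymbol{h}}(0)=(I_n-K)^{-1}D=0$. The explicit representation $\tilde{\boldsymbol{h}}(t)=\Gamma^{-1}(t)\mathbb{E}[\int_t^T \Gamma(s)(C(s)\tilde{\boldsymbol{h}}(0)+F(s))\ds \mid \mathcal{F}_t]$ then propagates this to $(\tilde{\boldsymbol{h}},\tilde{\boldsymbol{\eta}})\equiv(0,0)$. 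Invoking the one-to-one correspondence of Theorem \ref{connection} now converts this uniqueness into the uniqueness of a Nash equilibrium $\boldsymbol{\pi}^*$, whose explicit form is obtained from \eqref{construction} by setting $\phi_i=f_i$.

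The only residual point is admissibility of the reconstructed $\boldsymbol{\pi}^*$, which amounts to showing each $f_i$ lies in $L_{\mathbb{F}}^2(0,T;\mathbb{R}^1)$. This in turn follows from the bounded-below properties of $p_i$ and $|\check{h}_i|$ (yielding bounded $1/p_i$ and $1/\check{h}_i(0)$), the BMO integrability of $\Lambda_i/p_i$, the $L^2$ estimate on $\check{\eta}_i$, and the explicit exponential formula for $Y_i^*$ under bounded $r$ and $\rho$, combined with the boundedness of $1/\sigma_i$. Since Assumption \ref{rho} eliminates the coupling that drives the general obstruction, no serious obstacle remains; the delicate machinery of Section \ref{sxz12} collapses to a one-line BSDE computation, and the proof is essentially a verification exercise once the correct trivialization is spotted.
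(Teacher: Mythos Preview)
Your proposal is correct and follows essentially the same approach as the paper: both arguments observe that under Assumption~\ref{rho} the coupling term $\theta_i\widehat{(\rho\sigma\pi)}_i-\theta_i\rho_i\widehat{(\sigma\pi)}_i$ vanishes, forcing BSDE~\eqref{bsde3} (equivalently BSDE~\eqref{lbsde1}) to have the unique trivial solution $(\tilde{\boldsymbol{h}},\tilde{\boldsymbol{\eta}})\equiv(0,0)$, after which $\Psi<1$ and formula~\eqref{construction} yield the unique equilibrium. The only cosmetic difference is that the paper works directly with~\eqref{bsde3} and reads off the simplified strategy~\eqref{sxz13}, whereas you route the same observation through the matrix quantities $M,C,F,K,D$ and invoke Theorems~\ref{equilibrium} and~\ref{connection} explicitly; both paths are equivalent and equally short.
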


\begin{proof}
	Under Assumption \ref{rho}, BSDEs \eqref{bsde3} admit only trivial solutions $(0,0)$. BSDEs \eqref{bsde1}, \eqref{bsde2} and \eqref{bsde4} are independent of the agent index \(i\). Hence we always denote their solutions by \(p\), \(\Lambda\), \(h\), \(\eta\), \(\check{h}\), and \(\check{\eta}\) without index $i$ under Assumption \ref{rho}. The optimal strategies \eqref{control2} reduces to
	\begin{equation}\label{sxz13}
\begin{aligned}
		\pi^*_i = \theta_i \frac{\widehat{\sigma\pi^*}}{\sigma_i} - \frac{1}{\sigma_i} \bigg[& \left(-\frac{z_i}{\check{h}(0)} + \frac{1}{\gamma_i p(0)\check{h}(0)^2}\right)\check{\eta}\\
&+ \left(\frac{\Lambda}{p} + \rho\right)\left(Z^*_i + \left(-\frac{z_i}{\check{h}(0)} + \frac{1}{\gamma_i p(0)\check{h}(0)^2}\right)\check{h} \right) \bigg].
\end{aligned}	
\end{equation}
And in this case $\phi_i$ has a form
	\begin{equation} \label{phiz}
		\phi_i = \left(-\frac{z_i}{\check{h}(0)} + \frac{1}{\gamma_i p(0)\check{h}(0)^2}\right)\check{\eta} + \left(\frac{\Lambda}{p} + \rho\right)\left(Z^*_i + \left(-\frac{z_i}{\check{h}(0)} + \frac{1}{\gamma_i p(0)\check{h}(0)^2}\right)\check{h} \right),
	\end{equation}
	or equivalently
	\begin{equation*}
		\phi_i = \left(-\frac{z_i}{\check{h}(0)} + \frac{1}{\gamma_i p(0)\check{h}(0)^2}\right)\check{\eta} + \left(\frac{\Lambda}{p} + \rho\right)\frac{Y^*_i}{p}.
	\end{equation*}
	Substituting \(\phi_i\) into \eqref{sxz13}, we obtain a fully decoupled linear system. Noticing \(\Psi < 1\), we can derive a unique Nash equilibrium from \eqref{construction} based on the fully decoupled linear system.
\end{proof}

\begin{thm} \label{deternashthm}
	Assume Assumption \ref{deter} and $\Psi < 1$. Then there exists a unique Nash equilibrium.
\end{thm}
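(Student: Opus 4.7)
The plan is to verify the hypotheses of Theorem \ref{equilibrium} and then invoke Theorem \ref{connection}. Under Assumption \ref{deter}, the coefficients $r$ and $\rho_i$ are deterministic, so the martingale representation parts $\Lambda_i$ and $\check{\eta}_i$ of the solutions to BSDEs \eqref{bsde1} and \eqref{bsde4} vanish (as recorded in the remarks following Assumption \ref{deter}), and Remark \ref{sxz19} confirms that the feasibility condition \eqref{feasible} holds. These observations reduce everything to checking invertibility of $I_n-K$.

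The key computation is the following. Since $\check{\eta}_i\equiv 0$ for every $i$, the coefficient $c_i=-\check{\eta}_i/\check{h}_i(0)$ vanishes identically, and therefore the matrix $C$ defined just above Theorem \ref{equilibrium} is identically zero, regardless of the values of the $M_{ij}$'s and $\theta_i$'s. Consequently $K=\mathbb{E}[\int_0^T\Gamma(s)C(s)\,\mathrm{d}s]=0$ and $I_n-K=I_n$ is trivially invertible, placing us in Case~1 of Theorem \ref{equilibrium}. Hence BSDE \eqref{lbsde1} admits a unique solution $(\tilde{\boldsymbol{h}},\tilde{\boldsymbol{\eta}})\in S^2_{\mathbb F}(0,T;\mathbb R^n)\times L^2_{\mathbb F}(0,T;\mathbb R^n)$ with consistent initial value $\tilde{\boldsymbol{h}}(0)=D$. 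One may alternatively notice that with $C\equiv 0$ the BSDE collapses to a standard linear multi-dimensional BSDE with bounded $A,B$ and $F\in L^2_{\mathbb F}(0,T;\mathbb R^n)$, whose well-posedness is classical.

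Finally, Theorem \ref{connection} (which requires only $\Psi<1$) establishes a bijection between solutions of \eqref{lbsde1} and Nash equilibria of the game. Composing with the unique-solvability just obtained yields the existence and uniqueness of the Nash equilibrium; the equilibrium itself can be read off by substituting the unique $(\tilde{\boldsymbol{h}},\tilde{\boldsymbol{\eta}})$ into the explicit formula \eqref{construction}.

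I do not foresee any genuine obstacle in this argument: the hard analytic work has already been carried out in Theorems \ref{equilibrium} and \ref{connection}, and Assumption \ref{deter} is precisely strong enough to force $C\equiv 0$, sidestepping any delicate spectral analysis of $I_n-K$. The only point that warrants care is confirming that the determinism of $r$ and $\rho_i$ indeed makes $\Lambda_i$ and $\check{\eta}_i$ vanish (so that the explicit exponential formulas in the remarks are valid), and that the feasibility hypothesis used implicitly throughout Section~\ref{sxz7} is in force via Remark \ref{sxz19}; both are immediate.
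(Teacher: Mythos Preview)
Your proposal is correct and follows essentially the same approach as the paper: both arguments observe that Assumption \ref{deter} forces $\Lambda_i=\check{\eta}_i=0$, whence $c_i=0$ and $C\equiv 0$, so $K=0$ and $I_n-K$ is invertible, after which Theorems \ref{equilibrium} and \ref{connection} deliver the unique Nash equilibrium. Your write-up is in fact slightly more thorough, making explicit the feasibility check via Remark \ref{sxz19} and the identification $\tilde{\boldsymbol{h}}(0)=D$.
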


\begin{proof}
	Under Assumption \ref{deter}, both BSDE \eqref{bsde1} and BSDE \eqref{bsde4} degenerate into ordinary differential equations with deterministic coefficients. Consequently, \(\Lambda_i = 0\), \(\check{\eta}_i = 0\). In the mean time, the coefficient matrices \(A\) and \(B\) are deterministic, and \(C = K = 0\). By Theorems \ref{equilibrium} and \ref{connection}, BSDE \eqref{lbsde1} admits a unique solution which corresponds to the unique Nash equilibrium.
\end{proof}

\subsection{The marginal case}

For \(\Psi = 1\), by definition of $\Psi$, we have \(\theta_i = 1\) for all \(i=1,2,\cdots,n\). Then the equilibrium strategy \eqref{sxz11} reduces to
\begin{equation}\label{sxz14}
	\sigma_i\pi^*_i = \frac{n\theta_i}{n-1+\theta_i}\overline{\sigma\pi^*}-\frac{\phi_i}{1+\frac{\theta_i}{n-1}} = \overline{\sigma\pi^*} - \frac{n-1}{n}\phi_i.
\end{equation}
Set \(\Phi \triangleq \sum_{i=1}^n \phi_i\), then we discuss the existence of Nash equilibrium based on \(\Phi\).
\begin{enumerate}
	\item No Equilibrium: If \(\Phi \neq 0\), it contradicts with \eqref{sxz14} since a sum of \eqref{sxz14} implies $\Phi = 0$.
	\item Uncertain Situation: If \(\Phi = 0\), the existence of Nash equilibrium is uncertain. But if it exists, the equilibrium strategy should be parameterized by a process \(\chi \in L_{\mathbb{F}}^2(0, T; \mathbb{R}^1)\) chosen as a degree of freedom, i.e.
	\[
	\sigma_i\pi^*_i = \chi - \frac{n-1}{n}\phi_i,\ \ \ \ i=1,2,\cdots,n.
	\]
\end{enumerate}

Set $\tilde{\boldsymbol{h}}' = (\tilde{h}'_1,\tilde{h}'_2,\cdots,\tilde{h}'_n)^\top$ and $\tilde{\boldsymbol{\eta}}' = (\tilde{\eta}'_1,\tilde{\eta}'_2,\cdots,\tilde{\eta}'_n)^\top$. BSDEs \eqref{bsde3} can be rewritten as
\begin{equation} \label{lbsde2}
	\left\{\begin{array}{l}
		\mathrm{d}\tilde{\boldsymbol{h}}'=-\left\{A'\tilde{\boldsymbol{h}}' + B' \tilde{\boldsymbol{\eta}}' + C'\tilde{\boldsymbol{h}}'(0) + F' \right\} \dt+\tilde{\boldsymbol{\eta}}' \dw, \quad t \in[0, T], \\
		\tilde{\boldsymbol{h}}'(T)= 0,
	\end{array}\right.
\end{equation}
where \(A'\), \(B'\), and \(C'\) are coefficient matrices, 
and \(F'\) is a coefficient vector. Precisely, for $i,j=1,2,\cdots,n$,
\begin{equation*}
A'_{ij}\triangleq\left\{
\begin{aligned}
0,\ \ \ \ \ &i\neq j,\\
-r,\ \ \ &i=j,
\end{aligned}
\right.\ \ \ \ \
B'_{ij} \triangleq\left\{
\begin{aligned}
\frac{\rho_j-\rho_i}{n},\ \ \ &i\neq j,\\
- \rho_i,\ \ \ \ \ \ \ \ &i=j,
\end{aligned}
\right.
\end{equation*}
\begin{equation*}
C'_{ij} \triangleq\left\{
\begin{aligned}
\frac{\rho_j-\rho_i}{n}c_j,\ \ \ &i\neq j,\\
0,\ \ \ \ \ \ \ \ \ \ \ \ \ &i=j,
\end{aligned}
\right.\ \ \ \ \ F'_i\triangleq \sum_{i\neq j}\frac{\rho_j-\rho_i}{n}f_j - \sum_{i\neq j}\frac{\rho_j-\rho_i}{n-1}\chi.
\end{equation*}
Obviously $A' \in L_{\mathbb{F}}^{\infty}(0, T ; \mathbb{R}^{n\times n})$, $B' \in L_{\mathbb{F}}^{\infty}\left(0, T ; \mathbb{R}^{n\times n} \right)$, $C' \in L_{\mathbb{F}}^{2}\left(0, T ; \mathbb{R}^{n\times n} \right)$ and $F' \in L_{\mathbb{F}}^{2}\left(0, T ; \mathbb{R}^n \right)$.
Set $K' \triangleq \mathbb{E}\big[ \int_0^T \Gamma'(s) C'(s) \ds \big]$ and $D' \triangleq \mathbb{E}\big[ \int_0^T \Gamma'(s) F'(s) \ds \big]$,
where \( \Gamma' \in L_{\mathbb{F}}^2(0, T; \mathbb{R}^{n \times n}) \) solves SDE
\begin{equation*}
\left\{
\begin{aligned}
	\mathrm{d}\Gamma' &= \Gamma' \left[A' \dt + B' \dw\right], \quad t \in[0, T],\\
\Gamma'(0) &= I_n.
\end{aligned}
\right.
\end{equation*}
Then the consistent condition for \(\tilde{\boldsymbol{h}}'(0)\) to guarantee the well-posedness of BSDE \eqref{lbsde2} in the marginal case becomes
\begin{equation*}
	(I_n - K') \tilde{\boldsymbol{h}}'(0) = D'.
\end{equation*}

In general, with a degree of freedom process \(\chi\), it is difficult to guarantee both \(\Phi = 0\) and the well-posedness of BSDE \eqref{lbsde2} in the mean time. Even if the well-posedness of BSDE \eqref{lbsde2} is achieved by the invertibility of \(I_n - K'\) with the help of a sufficiently small time horizon \(T > 0\), to guarantee \(\Phi = 0\) with a suitable choice of \(\chi \in L_{\mathbb{F}}^2(0, T; \mathbb{R})\) is still a challenging problem. If we further know that BSDE \eqref{lbsde2} admits a unique solution, the difficulty is still there due to the required delicate balance between the free-choice parameter \(\chi\) and \(\Phi = 0\) in the marginal case. However, under the homogeneous risk preferences condition (Assumption \ref{rho}) or the deterministic coefficients condition (Assumption \ref{deter}), the coupled system is simplified much, and explicit criteria could be derived to guarantee both \(\Phi = 0\) and the well-posedness of BSDE \eqref{lbsde2}.

We first discuss the delicate balance between \(\chi\) and \(\Phi = 0\) under Assumption \ref{rho}.
\begin{thm} \label{criterion2}
	Assume Assumption \ref{rho} and \(\Psi = 1\). Then the existence of Nash equilibrium can be classified into the following situations.
	\begin{enumerate}
		\item Infinitely Many Equilibria: If the equality
		\begin{equation} \label{equivalent2}
			\mathbb{E}\int_0^T\left( r(s)+\frac{1}{2}|\rho(s)|^2 \right) \ds=\int_0^T\left( r(s)+\frac{1}{2}|\rho(s)|^2 \right) \ds+\int_0^T \rho(s) \dw_s,
		\end{equation}
		holds, then there exist infinitely many Nash equilibria whose components are parameterized by \(\chi \in L_{\mathbb{F}}^2(0, T; \mathbb{R}^1)\) as below
		\[
		\pi^*_i = \frac{\chi}{\sigma_i} - \frac{n-1}{n\sigma_i} \left[ \left(-\frac{z_i}{\check{h}(0)} + \frac{1}{\gamma_i p(0)\check{h}(0)^2}\right)\check{\eta} + \left(\frac{\Lambda}{p} + \rho\right)\frac{Y^*_i}{p} \right].
		\]
		\item No Equilibrium: If \eqref{equivalent2} does not hold, no Nash equilibrium exists.
	\end{enumerate}
\end{thm}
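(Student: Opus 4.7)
The plan is to exploit the simplifications afforded by Assumption~\ref{rho} and $\Psi = 1$ (which forces $\theta_i = 1$ for all $i$) to reduce the existence-of-equilibrium question entirely to the scalar condition $\Phi \equiv 0$, and then to show that this scalar condition on processes is equivalent to the very specific integral identity \eqref{equivalent2}. Under Assumption~\ref{rho}, the remark following it gives $(\tilde h_i, \tilde \eta_i) \equiv (0,0)$, so $\phi_i = f_i$. Moreover, when $\theta_i = 1$, $z_i = x_i - \hat x_i$ and a direct count yields $\sum_{i=1}^n z_i = 0$. Writing $Y_i^*(t) = \mathcal{E}(t)/(\gamma_i \check h(0))$ with $\mathcal{E}(t) \triangleq \exp\!\big(\int_0^t -\rho\,\dw + \int_0^t(-r - \rho^2/2)\,\ds\big)$, I would obtain
\begin{equation*}
\Phi = \frac{\hat\gamma}{\check h(0)}\!\left[\frac{\check\eta}{p(0)\,\check h(0)} + \Big(\tfrac{\Lambda}{p} + \rho\Big)\frac{\mathcal{E}}{p}\right].
\end{equation*}
This already reduces the problem to determining when the bracketed expression vanishes identically as a process.

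The heart of the argument is to translate this vanishing condition into a statement about $\mathcal{E}(T)$. Set $M(t) \triangleq \mathbb{E}[\mathcal{E}(T)\mid \mathcal{F}_t]$ and $N(t) \triangleq \mathbb{E}[\mathcal{E}(T)^2\mid \mathcal{F}_t]$, with martingale representations $\mathrm{d}M = m\,\dw$ and $\mathrm{d}N = n\,\dw$. From \eqref{pit}–\eqref{hit} one identifies $p = \mathcal{E}^2/N$, $\check h = -M/\mathcal{E}$; applying It\^o to these quotients and matching with BSDEs \eqref{bsde1} and \eqref{bsde4} yields $\Lambda/p + \rho = -\rho - n/N$ and $\check\eta = -(\rho M + m)/\mathcal{E}$. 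Substituting these and using $p(0) = 1/N(0)$, $\check h(0) = -M(0)$, the vanishing of the bracket becomes
\begin{equation*}
\rho\big(N(0) M - M(0) N\big) + \big(N(0) m - M(0) n\big) \equiv 0.
\end{equation*}
Since $N(0)M(t) - M(0)N(t) = \int_0^t (N(0)m - M(0)n)\,\dw$ starts at zero, letting $Q = N(0)m - M(0)n$ and $X = \int_0^\cdot Q\,\dw$ reduces the identity to $\mathrm{d}X = -\rho X\,\dw$ with $X(0) = 0$, forcing $X \equiv 0$ and thus $Q \equiv 0$, i.e.\ $n = (N(0)/M(0))m$ a.e. Integrating gives $N \equiv (N(0)/M(0))M$, and evaluating at $t = T$ forces $\mathcal{E}(T) = N(0)/M(0)$ a.s.\ (a constant). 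Conversely, if $\mathcal{E}(T)$ is a.s.\ constant, both $M$ and $N$ are constants, $m = n = 0$, and the bracket is identically zero. Since $\int_0^T\rho\,\dw$ has mean zero, $\mathcal{E}(T)$ is a.s.\ constant precisely when $-\ln\mathcal{E}(T) = \mathbb{E}[-\ln\mathcal{E}(T)]$ a.s., which is exactly \eqref{equivalent2}. This equivalence is the main technical obstacle and what makes the statement work.

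Having established $\Phi \equiv 0 \Leftrightarrow \eqref{equivalent2}$, the remainder is bookkeeping. In the marginal case $\Psi = 1$, summing \eqref{sxz14} over $i$ forces $\Phi \equiv 0$ for any candidate equilibrium; thus if \eqref{equivalent2} fails, no Nash equilibrium exists, giving case (2). If \eqref{equivalent2} holds, then $\sigma_i \pi_i^* = \chi - \tfrac{n-1}{n}\phi_i$ for an arbitrary $\chi \in L_{\mathbb{F}}^2(0,T;\mathbb{R}^1)$ produces a candidate strategy. Substituting the simplified form of $\phi_i$ (under Assumption~\ref{rho}) and dividing by $\sigma_i \in L_{\mathbb{F}}^\infty(0,T;\mathbb{R}_{\gg 1})$ yields the displayed expression for $\pi_i^*$. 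Admissibility $\pi_i^* \in \mathcal{U}$ is verified term-by-term: $\chi$ is in $\mathcal{U}$ by choice; $\check\eta \in L_{\mathbb{F}}^2$; and $(\Lambda/p + \rho)Y_i^*/p$ lies in $L_{\mathbb{F}}^2$ because $\Lambda \in L_{\mathbb{F}}^{2,\mathrm{BMO}}$, $p \in L_{\mathbb{F}}^\infty(0,T;\mathbb{R}_{\gg 1})$, $\rho \in L_{\mathbb{F}}^\infty$, and $\mathcal{E}$ has all moments. Finally, Theorem~\ref{sxz6} applied agent-by-agent confirms that each $\pi_i^*$ is the unique best response to the others, so each choice of $\chi$ produces a distinct Nash equilibrium, establishing case (1).
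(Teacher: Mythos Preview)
Your proof is correct and reaches the same conclusion as the paper, but the central equivalence $\Phi\equiv 0 \Leftrightarrow \eqref{equivalent2}$ is established by a genuinely different route. The paper factors $\Phi$ via \eqref{phiz} (using $\sum Z_i^*=0$) to isolate $\Xi=\check\eta+(\tfrac{\Lambda}{p}+\rho)\check h$, then introduces the single auxiliary process $L=p\check h$ and computes its It\^o dynamics \eqref{pht}; when $\Xi\equiv 0$ this collapses to $\mathrm{d}L=-rL\,\dt-\rho L\,\dw$ with $L(T)=-1$, whose explicit solution forces $\int_0^T\rho\,\dw+\int_0^T(r+\tfrac12\rho^2)\,\ds$ to be deterministic. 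You instead write $p=\mathcal{E}^2/N$ and $\check h=-M/\mathcal{E}$ in terms of the two martingales $M=\mathbb{E}[\mathcal{E}(T)\mid\mathcal{F}_\cdot]$ and $N=\mathbb{E}[\mathcal{E}(T)^2\mid\mathcal{F}_\cdot]$, compute $\Lambda/p+\rho$ and $\check\eta$ explicitly, and reduce the vanishing of the bracket to the proportionality $N\equiv (N(0)/M(0))M$, i.e.\ $\mathcal{E}(T)$ almost surely constant. Your martingale-representation argument is slightly longer but makes the meaning of \eqref{equivalent2} (constancy of $\mathcal{E}(T)$) completely transparent, and handles both implications in one stroke; the paper's argument via $L=p\check h$ is more compact but treats the two implications separately. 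One minor point: when you invoke Theorem~\ref{sxz6} at the end to certify best responses, what is actually needed is the full chain of Section~\ref{sxz7} culminating in \eqref{control3}; and note that under \eqref{equivalent2} your own computation gives $n\equiv 0$, hence $\Lambda/p+\rho=-\rho$ is bounded, which cleanly disposes of the admissibility check for the $(\Lambda/p+\rho)Y_i^*/p$ term.
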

\begin{proof}
	Under Assumption \ref{rho}, BSDEs \eqref{bsde3} admit only trivial solutions $(0,0)$. Since \(\theta_i = 1\) for all \(i=1,2,\cdots,n\), $\sum\limits_{k=1}^{n}Z^*_k=0$ and $\sum\limits_{k=1}^{n}z_k=0$. It follows from \eqref{phiz} that
	\begin{equation*}
		\Phi = \frac{\hat{\gamma}}{p(0)\check{h}(0)^2} \left( \check{\eta} + \left( \frac{\Lambda}{p}+\rho \right)\check{h} \right).
	\end{equation*}
Notice $\frac{\hat{\gamma}}{p(0)\check{h}(0)^2}\neq0$. Define
	\begin{equation*}
		\Xi \triangleq \check{\eta} + \left( \frac{\Lambda}{p}+\rho \right)\check{h}.
	\end{equation*}
We claim that there is no Nash equilibrium if $\Xi \not\equiv 0$, and there are infinitely many Nash equilibria if $\Xi\equiv0$.

Then we present an equivalent condition to $\Xi\equiv0$. We claim that
$\Xi\equiv0$ if and only if \eqref{equivalent2} holds.
	
\(\Xi \equiv 0 \implies\) \eqref{equivalent2}: Set $L(t) = p(t)\check{h}(t)$ for $t\in[0,T]$, and by It\^{o}'s formula, it follows that
	\begin{equation} \label{pht}
		\begin{aligned}
			\mathrm{d}L &= \left[ \check{h}\left(-r p+\rho ^2 p+2 \rho \Lambda+\frac{\Lambda^2}{p}\right)+\check{\eta}\left( \rho p+\Lambda \right) \right] \dt + \left( \check{\eta} p+\check{h} \Lambda\right) \dw, \\
			& = \left( -r L+\Xi( \rho p+\Lambda) \right) \dt + \left(-\rho L + p\Xi \right) \dw. \\
		\end{aligned}
	\end{equation}
If $\Xi \equiv 0$, we get from \eqref{pht} an SDE
	\begin{equation} \label{lsde2}
		\left\{\begin{array}{l}
			\mathrm{d}L = -rL\dt - \rho L\dw ,
			\quad t \in[0, T], \\
			L(T)= -1, \\
			L(0)= p(0)\check{h}(0).
		\end{array}\right.
	\end{equation}
The first two equations in \eqref{lsde2} give a unique explicit $\mathbb{F}$-adapted solution as below
	\begin{equation*}
		L(t)=-\exp \left( \int_t^T \rho(s) \dw_s + \int_t^T\left( r(s)+\frac{1}{2} \rho(s)^2 \right) \ds \right), \quad t \in [0, T].
	\end{equation*}
Hence, if SDE \eqref{lsde2} is solvable, then
$$\int_0^T \rho(s) \dw_s + \int_0^T\left( r(s)+\frac{1}{2} \rho(s)^2 \right) \ds=L(0)= p(0)\check{h}(0) ,$$ must be a constant, which implies that \eqref{equivalent2} is true.
	
\eqref{equivalent2} \(\implies \Xi \equiv 0\): If \eqref{equivalent2} holds, $\int_0^T\left( r(s)+\frac{1}{2}\rho(s)^2 \right) \ds+\int_0^T \rho(s) \dw$ is a constant, which together with $\int_0^t \rho(s) \dw_s + \int_0^t\left( r(s)+\frac{1}{2} \rho(s)^2 \right) \ds $ is $\mathcal{F}_t$-measurable, leads to $\int_t^T \rho(s) \dw_s + \int_t^T\left( r(s)+\frac{1}{2} \rho(s)^2 \right) \ds $ is $\mathcal{F}_t$-measurable for $t\in[0,T]$. According to \eqref{sxz15} and \eqref{hit}, we know
	\begin{equation*}
		p(t)\check{h}(t) = -\frac{\mathbb{E} \bigg[ \exp\Big\{\int_{t}^{T} -\rho \dw+ \int_{t}^{T}(-r- \frac{\rho^2}{2} ) \ds \Big\} \;\bigg{|}\; \mathcal{F}_t \bigg]}{\mathbb{E}\bigg[ \exp\Big\{\int_{t}^{T} -2 \rho \dw+ \int_{t}^{T}(-2r-\rho_i^2) \ds \Big\} \;\bigg{|}\; \mathcal{F}_t \bigg]}, 
	\end{equation*}
and due to measurability we further have
	\begin{equation*}
		p(t)\check{h}(t) = -\exp \left( \int_t^T \rho \dw + \int_t^T(r+\frac{1}{2} \rho^2) \ds \right).
	\end{equation*}
This shows that $L(t)=p(t)\check{h}(t)$ is a solution of SDE \eqref{lsde2}. Bearing in mind that $p(t)\check{h}(t)$ is also a solution of SDE \eqref{pht}, we immediately have $\Xi\equiv0$.
\end{proof}

Let us see the delicate balance between \(\chi\) and \(\Phi = 0\) under Assumption \ref{deter}.

\begin{thm} \label{criteria}
	Assume Assumption \ref{deter} and $\Psi = 1$. Then the existence of Nash equilibrium can be classified into the following situations.
\begin{enumerate}
		\item Infinitely Many Equilibria: If the equality
		\begin{equation} \label{equivalent1}
			\int_{0}^{T} \mathbb{E} \big[ q (\bar{\rho}\hat{\eta}' + G)\big] \ds = \int_{0}^{T} q (\bar{\rho}\hat{\eta}' + G) \ds + \int_{0}^{T} q \hat{\eta}' \dw ,
		\end{equation}
		holds, where
		\begin{equation}\label{sxz16}
			q(t) \triangleq e^{-\int_{0}^{t}r(s) \ds},\quad \hat{\eta}' \triangleq \sum_{i=1}^{n}\rho_i\frac{Y^*_i}{p_i},\quad G \triangleq \sum_{i=1}^n \sum_{j \neq i} \frac{\rho_j - \rho_i}{n}\rho_j \frac{Y^*_j}{p_j},
		\end{equation}
		there exist infinitely many Nash equilibria whose components are parameterized by \(\chi \in L_{\mathbb{F}}^2(0, T; \mathbb{R})\) as below
		\begin{equation} \label{strategy}
			\pi^*_i= \frac{\chi}{\sigma_i} - \frac{n-1}{n\sigma_i}\left( \tilde{\eta}_i' + \rho_i \frac{Y^*_i}{p_i} \right),
		\end{equation}
		where $\tilde{\eta}'_i$ is the solution to BSDE \eqref{lbsde2}.
		\item No Equilibrium: If \eqref{equivalent1} does not hold, no Nash Equilibrium exists.
	\end{enumerate}
\end{thm}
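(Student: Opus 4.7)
The plan is to mirror the strategy used in the proof of Theorem \ref{criterion2}, adapting it to the deterministic-coefficients setting. Under Assumption \ref{deter}, the remark preceding the theorem gives $\Lambda_i\equiv 0$ and $\check\eta_i\equiv 0$, whence $c_i\equiv 0$ and $f_i=\rho_iY_i^*/p_i$, so that $\phi_i=\tilde\eta_i+\rho_iY_i^*/p_i$. Summing over $i$ yields $\Phi=E+\hat\eta'$, where $E\triangleq\sum_{i=1}^n\tilde\eta_i'$. As in Theorem \ref{criterion2}, the existence of a Nash equilibrium in the marginal case is equivalent to $\Phi=0$, i.e., to $E=-\hat\eta'$, so the task reduces to translating this identity into a scalar criterion on the market parameters.

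The key step is to sum BSDE \eqref{lbsde2} over $i=1,\dots,n$. Setting $H\triangleq\sum_{i=1}^n\tilde h_i'$ and using the identity $\sum_{j\ne i}(\rho_j-\rho_i)=n(\bar\rho-\rho_i)$, one checks that the $\chi$-dependent part of $\sum_iF_i'$ telescopes to $-\frac{n\chi}{n-1}\sum_i(\bar\rho-\rho_i)=0$, while the remaining piece rearranges to $\sum_j(\rho_j-\bar\rho)f_j=G$. Under Assumption \ref{deter}, $C'\equiv 0$ (because $c_i\equiv 0$), so the $C'\tilde{\boldsymbol h}'(0)$-contribution vanishes, and an analogous bookkeeping gives $\sum_i(B'\tilde{\boldsymbol\eta}')_i=-\bar\rho E$ and $\sum_i(A'\tilde{\boldsymbol h}')_i=-rH$. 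The summed equation is therefore the scalar linear BSDE
\begin{equation*}
\mathrm{d}H(t)=\bigl[r(t)H(t)+\bar\rho(t)E(t)-G(t)\bigr]\dt+E(t)\dw(t),\qquad H(T)=0,
\end{equation*}
which is independent of the free parameter $\chi$ and, by classical BSDE theory, admits a unique solution $(H,E)\in S_{\mathbb F}^2(0,T;\mathbb R^1)\times L_{\mathbb F}^2(0,T;\mathbb R^1)$.

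It remains to decide when $E=-\hat\eta'$. Substituting this candidate into the summed BSDE and applying It\^o's formula to $qH$ with $q$ as in \eqref{sxz16} yields $\mathrm{d}(qH)=-q(\bar\rho\hat\eta'+G)\dt-q\hat\eta'\dw$. Integrating over $[0,T]$ and using $H(T)=0$ gives
\begin{equation*}
H(0)=\int_0^T q\bigl(\bar\rho\hat\eta'+G\bigr)\ds+\int_0^T q\hat\eta'\dw.
\end{equation*}
Since $H(0)$ is $\mathcal F_0$-measurable and hence deterministic, it must equal its expectation, which is precisely \eqref{equivalent1}. Conversely, when \eqref{equivalent1} holds, the above relation defines a process $H$ such that $(H,-\hat\eta')$ solves the summed BSDE, and by uniqueness of that BSDE one concludes $E=-\hat\eta'$ for every $\chi\in L_{\mathbb F}^2(0,T;\mathbb R^1)$, hence $\Phi=0$. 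Under Assumption \ref{deter}, $C'\equiv 0$ forces $K'=0$, so BSDE \eqref{lbsde2} is uniquely solvable for each $\chi$, and each $\chi$ then yields a Nash equilibrium of the form \eqref{strategy}, producing infinitely many equilibria. If \eqref{equivalent1} fails, the same argument gives $\Phi\ne 0$ for every $\chi$, ruling out any equilibrium.

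I expect the main technical obstacle to lie in the bookkeeping for the $\chi$-cancellation in $\sum_iF_i'$ and in the careful transition between the vector BSDE \eqref{lbsde2} and the scalar equation on $(H,E)$; once the summed BSDE is seen to be independent of $\chi$, the integrating-factor computation closely parallels the one in the proof of Theorem \ref{criterion2} and the classification follows.
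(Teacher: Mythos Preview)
Your proposal is correct and follows essentially the same approach as the paper: sum the vector BSDE \eqref{lbsde2} to obtain a scalar BSDE for $(H,E)=(\sum_i\tilde h_i',\sum_i\tilde\eta_i')$ that is independent of $\chi$, observe that $\Phi=0$ is equivalent to $E=-\hat\eta'$, and then use the integrating factor $q$ to convert this into the deterministic criterion \eqref{equivalent1}. The paper uses the notation $(\hat h,\hat\eta)$ in place of your $(H,E)$ and phrases the converse as well-posedness of a constrained forward SDE, but the logic is identical.
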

\begin{proof}

	Under Assumption \ref{deter}, \(c_i = 0\) for \(i=1,2,\cdots,n\) and thus \(C' = 0\). Then BSDE \eqref{lbsde2} admits a unique solution $(\tilde{\boldsymbol{h}}',\tilde{\boldsymbol{\eta}}')$ parameterized by \(\chi\). In this case \(\phi_i\) has a form
	\begin{equation}\label{sxz17}
		\phi_i = \tilde{\eta}_i' + \rho_i \frac{Y^*_i}{p_i}.
	\end{equation}
Denote by \(\vec{1} = (1, 1, \ldots, 1)^\top \in \mathbb{R}^n\) the unit vector. Noticing $\sum\limits_{i=1}^{n} A'_{ij}= -r$ and $\sum\limits_{i=1}^{n} B'_{ij}= -\bar{\rho}$, and using BSDE \eqref{lbsde2}, we get a scalar-valued BSDE:
	\begin{equation} \label{lbsde4}
		\left\{\begin{array}{l}
			\mathrm{d} \hat{h} =-\left\{-r\hat{h} - \bar{\rho} \hat{\eta} + G \right\} \dt+\hat{\eta} \dw, \quad t \in[0, T], \\
			\hat{h} (T)= 0,
		\end{array}\right.
	\end{equation}
	where $ \hat{h}\triangleq\vec{1}\cdot\tilde{\boldsymbol{h}}'$, $\hat{\eta}\triangleq \vec{1}\cdot\tilde{\boldsymbol{\eta}}'$ and \(G\triangleq\vec{1}\cdot F' \). 
Note that $G$ is independent of \(\chi\) and so is BSDE \eqref{lbsde4}. By \eqref{sxz16} and \eqref{sxz17} we know $ \hat{\eta} = \hat{\eta}'-\Phi$, so BSDE \eqref{lbsde4} involves $\Phi$. 
Clearly, if $\Phi=0$, there are infinitely many equilibria given by \eqref{strategy}, and otherwise there is no Nash Equilibrium. So we need to prove that \eqref{equivalent1} is a criteria to determine $\Phi=0$ or not.

	If $\Phi = 0$, BSDE \eqref{lbsde4} can be rewritten as the following SDE 
\begin{equation} \label{sde1}
		\left\{\begin{aligned}
			& \mathrm{d}\hat{h}= -\left\{-r\hat{h} +\bar{\rho}\hat{\eta}' + G \right\} \dt - \hat{\eta}' \dw, \quad t \in[0, T], \\
			& \hat{h}(T)=0.
		\end{aligned}\right.
	\end{equation}
By SDE \eqref{sde1} and It\^o's lemma, we obtain 
	\begin{equation*}
		\hat{h}(t) = \frac{1}{q(t)}\bigg[ \hat{h}(0) - \int_{0}^{t} q (\bar{\rho}\hat{\eta}' + G) \ds - \int_{0}^{t} q \hat{\eta}' \dw \bigg].
	\end{equation*}
Letting $t = T$ and taking expectation, we have
	\begin{equation*}
		\hat{h}(0) = \int_{0}^{T} \mathbb{E} \big[ q (\bar{\rho}\hat{\eta}' + G)\big] \ds.
	\end{equation*}
Hence
	\begin{equation*}
		\hat{h}(t) = \frac{1}{q(t)}\bigg[ \int_{0}^{T} \mathbb{E} \big[ q (\bar{\rho}\hat{\eta}' + G)\big] \ds - \int_{0}^{t} q (\bar{\rho}\hat{\eta}' + G) \ds - \int_{0}^{t} q \hat{\eta}' \dw \bigg].
	\end{equation*}
If the equality \eqref{equivalent1} holds, the constraint $\hat{h}(T)=0$ is satisfied, and the constrained SDE \eqref{sde1} is well-posed. 

On the other hand, if the constrained SDE \eqref{sde1} is well-posed with a solution $\hat{h}$, then $(\hat{h},-\hat{\eta}')$ is also a solution to BSDE \eqref{lbsde4}. By the uniqueness of solution to BSDE \eqref{lbsde4}, we know $-\hat{\eta}'= \hat{\eta}$ which implies $\Phi = \hat{\eta} + \hat{\eta}' = 0$.
\end{proof}

\section{Example}\label{sec:example}
In the case that both Assumptions \ref{rho} and \ref{deter} are satisfied, explicit expressions for the feedback strategies can be given (if existing). We will show this in the following theorem, which can also be regarded as a special example for our theoretical results.
\begin{thm}
	Assume Assumptions \ref{rho} and \ref{deter} hold. Then there exists a unique Nash equilibrium if $\Psi < 1$ and no Nash equilibrium exists if $\Psi = 1$.
\end{thm}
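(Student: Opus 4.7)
The plan is to split into the two cases $\Psi < 1$ and $\Psi = 1$ and, in each case, invoke the theorem already established that matches the active assumption.

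For $\Psi < 1$, since Assumption \ref{rho} holds, Theorem \ref{sxz18} applies and immediately yields existence and uniqueness of a Nash equilibrium. (Alternatively, Theorem \ref{deternashthm} would do the same job via Assumption \ref{deter}.) If desired, one can make the equilibrium fully explicit: under Assumption \ref{rho}, BSDE \eqref{bsde3} is driven by $\theta_i\widehat{(\rho\sigma\pi)}_i-\theta_i\rho_i\widehat{(\sigma\pi)}_i\equiv 0$, so $(\tilde h_i,\tilde\eta_i)\equiv(0,0)$; under Assumption \ref{deter}, BSDE \eqref{bsde1} and BSDE \eqref{bsde4} degenerate to ODEs with $\Lambda\equiv 0$ and $\check\eta\equiv 0$, giving $p(t)=\exp\bigl(\int_t^T(2r-\rho^2)\,\ds\bigr)$ and $\check h(t)=-\exp\bigl(-\int_t^T r(s)\,\ds\bigr)$. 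Substituting these into the construction \eqref{construction} yields the closed-form equilibrium strategies.

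For $\Psi = 1$, we apply Theorem \ref{criterion2} (whose hypotheses are exactly Assumption \ref{rho} and $\Psi=1$). According to that theorem, a Nash equilibrium exists if and only if the equality \eqref{equivalent2} holds. Now invoke Assumption \ref{deter}: both $r$ and $\rho$ are deterministic, so the two drift-type integrals in \eqref{equivalent2} coincide with the same deterministic constant, and \eqref{equivalent2} collapses to
\begin{equation*}
\int_0^T \rho(s)\,\dw(s) = 0 \quad \text{a.s.}
\end{equation*}
Because $\rho$ is deterministic with $\rho\not\equiv 0$, the Itô integral $\int_0^T\rho(s)\,\dw(s)$ is a centered Gaussian random variable with variance $\int_0^T \rho(s)^2\,\ds>0$, hence not almost surely zero. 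Therefore \eqref{equivalent2} fails and Theorem \ref{criterion2} rules out any Nash equilibrium.

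There is no real obstacle in this proof: the two parts are essentially corollaries of Theorems \ref{sxz18} and \ref{criterion2}, combined with the fact that under Assumption \ref{deter} the random contribution in \eqref{equivalent2} reduces to a nondegenerate Gaussian whose vanishing is incompatible with $\rho\not\equiv 0$.
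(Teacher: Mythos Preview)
Your proof is correct and follows essentially the same route as the paper: for $\Psi<1$ you invoke Theorem~\ref{sxz18} (the paper cites both Theorems~\ref{sxz18} and~\ref{deternashthm}), and for $\Psi=1$ you reduce the criterion~\eqref{equivalent2} under deterministic $r,\rho$ to the impossibility of $\int_0^T\rho\,\dw=0$ a.s.\ for a nondegenerate Gaussian. The paper additionally observes the equivalent direct computation $\Xi=\check\eta+(\Lambda/p+\rho)\check h=\rho\check h\not\equiv0$, but this is just the intermediate quantity from the proof of Theorem~\ref{criterion2} and amounts to the same argument.
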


\begin{proof}
	Under Assumptions \ref{rho} and \ref{deter}, $r$ and $\rho$ are deterministic functions, and
	\begin{equation*}
		p(t) = \exp((-2r+\rho^2)(t-T)), \quad \check{h}(t)=-\exp\{r(t-T)\}, \quad \Lambda = \tilde{h}=\tilde{\eta} = \check{\eta}=0,
	\end{equation*}
where $(p,\Lambda)$, $(\tilde{h},\tilde{\eta})$ and $(\check{h},\check{\eta})$ are the solutions to BSDE \eqref{bsde1}, \eqref{bsde3} and \eqref{bsde4}, respectively.
	
If $\Psi < 1$, by Remark \ref{sxz19} and Theorems \ref{sxz18} and \ref{deternashthm}, there exists a unique Nash equilibrium. In this case the optimal feedback strategy \eqref{control2} reduces to
	\begin{equation*} 
		\pi_i^* = \theta_i \frac{\widehat{(\sigma\pi^*)}_i }{\sigma_i} - \frac{\rho}{\sigma_i}\left( Z^*_i - e^{rt} z_i - \frac{ \exp \left(r(t-T)+\rho^2 T \right)}{\gamma_i} \right).
	\end{equation*}
And $\phi_i$ has a form
	\begin{equation*}
		\phi_i = \rho \left( Z^*_i - e^{rt} z_i - \frac{ \exp \left(r(t-T)+\rho^2 T \right)}{\gamma_i} \right).
	\end{equation*}
Hence the unique Nash equilibrium is
	\begin{equation*}
		\sigma_i\pi^*_i = -\frac{1}{1-\Psi} \frac{n\theta_i}{n-1+\theta_i}\sum_{i=1}^{n}\frac{\phi_i}{n+\frac{n\theta_i}{n-1}} - \frac{\phi_i}{1+\frac{\theta_i}{n-1}}.
	\end{equation*}
	
If $\Psi = 1$, no equilibrium exists. It follows from 
the term in Theorem \ref{criterion2} that 
	\begin{equation*}
		\Xi = \check{\eta} + \left( \frac{\Lambda}{p}+\rho\right)\check{h} = \rho\check{h} \not\equiv 0,
	\end{equation*}
	or equivalently, \eqref{equivalent2} does not hold since $r+\frac{1}{2}\rho^2$ is deterministic and $\int_0^T \rho \dw\not\equiv0$. Alternatively, we obtain the same conclusion by checking \eqref{equivalent1} in Theorem \ref{criteria}. If \eqref{equivalent1} holds,
	\begin{equation*}
		\int_{0}^{T} \mathbb{E} \left[ q \bar{\rho}\hat{\eta}'\right] \ds = \int_{0}^T q \bar{\rho}\hat{\eta}' \, \ds + \int_{0}^{T} q \hat{\eta}' \, \dw.
	\end{equation*}
Since
	\begin{equation*}
		\bar{\rho}\hat{\eta}' = \sum_{i=1}^{n} \rho^2\frac{Y^*_i}{p} = \sum_{i=1}^{n} \rho^2\left(Z^*_i + \left(-\frac{z_i}{\check{h}(0)}+\frac{1}{\gamma_ip(0)\check{h}(0)^2}\right)\check{h} \right) = \sum_{i=1}^{n} \frac{\hat{\gamma} \rho^2}{p(0)\check{h}(0)^2}\check{h},
	\end{equation*}
	we see
$q \bar{\rho}\hat{\eta}'$ is deterministic and $\hat{\eta}'\not\equiv0$. But $\int_{0}^{T} q \hat{\eta}' \, \dw$ obeys the law of normal distribution, which results in a contradiction.
\end{proof}

\section{Conclusion}\label{sec:conclusion}

In this paper, we investigate time-inconsistent Nash equilibrium strategies for a multi-agent game under MV criterion. We first solve a linearly constrained stochastic LQ control problem to derive optimal strategies for each agent. Then we use a decoupling technique to establish a connection between the Nash equilibrium and a novel type of linear multi-dimensional BSDEs \eqref{lbsde1}. The well-posedness of such BSDEs is studied in both the usual case and the marginal case. Based on Assumptions \ref{rho} and \ref{deter}, we have more refined analyses of Nash equilibria, as summarized in the following table.
\begin{table}[H]
	\begin{spacing}{1.0}
		\centering
		\caption{Existence of Nash Equilibria} \label{conclusion}
		\begin{tabular}{l|c|c|c|c}
			\hline
			{\bf Assumptions} & {\bf None} & {\bf \ref{rho}} & {\bf \ref{deter}} & {\bf \ref{rho} and \ref{deter}} \\
			\hline
			$\Psi < 1$ & Discussion & Unique& Unique & Unique \\
			\hline
			$\Psi = 1$ & Open & None or infinity & None or infinity & None \\
			\hline
		\end{tabular}
	\end{spacing}
\end{table}

\appendix
\section{Well-Posedness of a New Type of Nonlinear BSDEs}\label{appendix}
\begin{lemma}
Consider the following BSDE
	\begin{equation} \label{newbsde}
		Y(t)=\xi+\int_t^T f\big(s, Y(s), Z(s), C(s)Y(0)\big) \ds-\int_t^T Z(s) \dw(s), \quad t \in[0, T],
	\end{equation}
	where
	\begin{enumerate}
		\item[{\rm (a1)}] $\left\{W(t)\right\}_{t \in[0, T]}$ is a standard $m$-dimensional Brownian motion with its natural filtration denoted by $\mathbb{F}=\left\{\mathcal{F}_t\right\}_{t \in[0, T]}$;
		\item[{\rm (a2)}] $\xi \in \mathcal{F}_T$ and $\mathbb{E}|\xi|^2<\infty$; 
		\item[{\rm (a3)}] $f: \Omega \times[0, T] \times \mathbb{R}^n \times \mathbb{R}^{n \times m} \times \mathbb{R}^n \rightarrow \mathbb{R}^n$ is $\mathscr{P} \otimes \mathcal{B}(\mathbb{R}^n) \otimes \mathcal{B}(\mathbb{R}^{n \times m})\otimes \mathcal{B}(\mathbb{R}^n) $-measurable and $C: \Omega \times[0, T] \rightarrow \mathbb{R}^{n\times n}$ is $\mathscr{P}$-measurable, where $\mathscr{P}$ is the predictable sub-$\sigma$ algebra of $\mathcal{F} \otimes \mathcal{B}([0,T])$;
		\item[{\rm (a4)}] for any $t \in[0, T], y_1, y_2 \in \mathbb{R}^n, z_1, z_2 \in \mathbb{R}^{n \times m}$, there exists a Lipschitz constant $L \geq 0$ such that
		\begin{equation*}
			|f\left(t, y_1, z_1, \zeta_1 \right)-f\left(t, y_2, z_2,\zeta _2 \right)|\leq L\left(|y_1-y_2|+|z_1-z_2| + |\zeta_1 - \zeta_2 |\right);
		\end{equation*}
		\item[{\rm (a5)}] $ \mathbb{E}\int_0^T(|f(t, 0,0,0)|^2+|C(t)|^2) \dt<\infty$.
	\end{enumerate}
	Fix the terminal value $\xi$, the driver $f$ and process $C$, then
BSDE \eqref{newbsde} admits a unique solution $( Y, Z) \in S_{\mathbb{F}}^2\left(0, T ; \mathbb{R}^n\right) \times L_{\mathbb{F}}^2\left(0, T ; \mathbb{R}^{n\times m}\right)$, provided that the horizon time $T>0$ is sufficiently small.
\end{lemma}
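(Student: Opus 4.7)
The strategy is a classical fixed-point argument in $\mathbb{R}^n$ (equivalently, in $L^2(\mathcal{F}_0;\mathbb{R}^n)$), exploiting the fact that the only nonstandard feature of the driver is its dependence on the deterministic (or $\mathcal{F}_0$-measurable) quantity $Y(0)$. For each prescribed $y_0 \in \mathbb{R}^n$, freeze $Y(0)=y_0$ in the driver and consider the auxiliary classical BSDE
\begin{equation*}
\hatY(t)=\xi+\int_t^T f\bigl(s,\hatY(s),\hatZ(s),C(s)y_0\bigr)\ds-\int_t^T \hatZ(s)\dw(s),\quad t\in[0,T].
\end{equation*}
Under (a1)--(a5), the driver $(y,z)\mapsto f(s,y,z,C(s)y_0)$ is $(L,L)$-Lipschitz uniformly in $(s,\omega)$, and by (a5) and the Lipschitz property (a4) its value at $(0,0)$ lies in $L^2_{\mathbb{F}}(0,T;\mathbb{R}^n)$. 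The Pardoux--Peng theorem then furnishes a unique solution $(\hatY^{y_0},\hatZ^{y_0})\in S^2_{\mathbb{F}}\times L^2_{\mathbb{F}}$. This defines a map $\Phi:\mathbb{R}^n\to\mathbb{R}^n$, $\Phi(y_0):=\hatY^{y_0}(0)$. A fixed point $y_0^*=\Phi(y_0^*)$ yields a solution of \eqref{newbsde}, and conversely any solution produces such a fixed point; hence existence and uniqueness for \eqref{newbsde} reduce to showing $\Phi$ has a unique fixed point.

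To run a contraction argument, take $y_0,y_0'\in\mathbb{R}^n$ with associated $(Y,Z):=(\hatY^{y_0},\hatZ^{y_0})$ and $(Y',Z')$. Set $\delta Y=Y-Y'$, $\delta Z=Z-Z'$. Applying It\^o's formula to $|\delta Y|^2$ between $0$ and $T$, using $\delta Y(T)=0$, taking expectations to eliminate the martingale term, and exploiting (a4) together with Young's inequality $2ab\le \epsilon a^2+\epsilon^{-1}b^2$, one gets, after absorbing the $|\delta Z|^2$ term on the left,
\begin{equation*}
|\delta Y(0)|^2+\tfrac12\mathbb{E}\!\int_0^T|\delta Z|^2\ds\le K\,\mathbb{E}\!\int_0^T|\delta Y(s)|^2\ds+|y_0-y_0'|^2\,\mathbb{E}\!\int_0^T|C(s)|^2\ds,
\end{equation*}
for a constant $K=K(L)$. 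A conditional version of the same computation combined with Gronwall's lemma then upgrades this to
\begin{equation*}
|\delta Y(0)|^2\le \kappa(T)\,|y_0-y_0'|^2,\qquad \kappa(T):=e^{KT}\,\mathbb{E}\!\int_0^T|C(s)|^2\ds.
\end{equation*}
By (a5) and dominated convergence, $\mathbb{E}\!\int_0^T|C(s)|^2\ds\to 0$ as $T\downarrow 0$, hence $\kappa(T)\to 0$. Choosing $T$ small enough that $\kappa(T)<1$ makes $\Phi$ a strict contraction on $\mathbb{R}^n$, and Banach's fixed-point theorem delivers a unique $y_0^*$.

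To conclude, set $(Y,Z):=(\hatY^{y_0^*},\hatZ^{y_0^*})$; then $Y(0)=y_0^*$ by construction, so $C(s)Y(0)=C(s)y_0^*$ in the driver, meaning $(Y,Z)$ solves \eqref{newbsde}. Uniqueness in $S^2_{\mathbb{F}}\times L^2_{\mathbb{F}}$ follows because any solution $(Y,Z)$ of \eqref{newbsde} is, by definition, the Pardoux--Peng solution of the auxiliary BSDE associated with $y_0=Y(0)$, so that $\Phi(Y(0))=Y(0)$, forcing $Y(0)=y_0^*$ and hence $(Y,Z)=(\hatY^{y_0^*},\hatZ^{y_0^*})$. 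The principal obstacle is exactly the contraction estimate: one must route the $y_0$-perturbation through the $L^2$-norm of $C$ on $[0,T]$ (rather than through a bound depending on the full $\|C\|$), since otherwise smallness of $T$ would not suppress the contribution of $|y_0-y_0'|^2$; the rest is standard BSDE a priori analysis.
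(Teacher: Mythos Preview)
Your proof is correct and follows essentially the same route as the paper: freeze $Y(0)=y_0$, solve the resulting classical Lipschitz BSDE, define $\Phi(y_0)=\hatY^{y_0}(0)$, and show $\Phi$ is a contraction on $\mathbb{R}^n$ with contraction constant governed by $\mathbb{E}\int_0^T|C(s)|^2\ds$, which vanishes as $T\downarrow 0$. The only cosmetic difference is that the paper obtains the a priori estimate via the exponential-weight trick (It\^o on $e^{\beta t}|\delta Y(t)|^2$ with $\beta=16(L^2+1)$) rather than your It\^o-plus-Gronwall route, but both yield the same contraction bound.
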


\begin{proof}
For a given $v\in\mathbb{R}^n$, consider BSDE
	\begin{equation*}
		Y^v(t)=\xi+\int_t^T f\big(s, Y^v(s), Z^v(s), C(s)v\big) \ds-\int_t^T Z^v(s) \dw(s), \quad t \in [0, T].
	\end{equation*}
It is well known that, under (a1)--(a5), the above BSDE admits a unique solution $( Y^v, Z^v) \in S_{\mathbb{F}}^2\left(0, T ; \mathbb{R}^n\right) \times L_{\mathbb{F}}^2\left(0, T ; \mathbb{R}^{n\times m}\right)$.

Define the mapping \( \Phi: \mathbb{R}^n \to \mathbb{R}^n \) by \( \Phi(v) = Y^v(0) \). Below, we analyze the contraction property of \( \Phi \).

Consider the following two BSDEs with parameters $\hat{v}$ and $\tilde{v}\in\mathbb{R}^n$:
	\begin{equation*}
		\begin{cases}
			\displaystyle \hat{Y}^{\hat{v}}(t) = \xi + \int_t^T \big( f(s, \hat{Y}^{\hat{v}}(s), \hat{Z}^{\hat{v}}(s),C(s)\hat{v}) \big) \ds - \int_t^T \hat{Z}^{\hat{v}}(s) \dw(s), \\
			\displaystyle \tilde{Y}^{\tilde{v}}(t) = \xi + \int_t^T \big( f(s, \tilde{Y}^{\tilde{v}}(s), \tilde{Z}^{\tilde{v}}(s)), C(s)\tilde{v} \big) \ds - \int_t^T \tilde{Z}^{\tilde{v}}(s) \dw(s).
		\end{cases}
	\end{equation*}
	For $\beta = 16(L^2+1)$, the standard estimate for BSDEs yields
	\begin{equation*} 
\begin{aligned}
& |\hat{Y}^{\hat{v}}(t) -\tilde{Y}^{\tilde{v}}(t)|^2 + \frac{1}{2} \mathbb{E}^{\mathcal{F}_t} \left[ \int_t^T e^{\beta(s-t)} \left( |\hat{Y}^{\hat{v}}(s) -\tilde{Y}^{\tilde{v}}(s)|^2 + |\hat{Z}^{\hat{v}}(s) -\tilde{Z}^{\tilde{v}}(s)|^2 \right) \ds \right]\\
\leq& \frac{L^2}{4(L^2 + 1)} \mathbb{E}^{\mathcal{F}_t} \int_t^T e^{\beta(s-t)} |C(s)|^2 |\hat{v}-\tilde{v}|^2 \ds.
\end{aligned}	
\end{equation*}
In particular, it implies 	\begin{equation*}
		|\hat{Y}^{\hat{v}}(0) -\tilde{Y}^{\tilde{v}}(0)|^2 \leq \frac{1}{4} \int_0^T e^{\beta s}\mathbb{E}\big[ |C(s)|^2\big] \ds \cdot |\hat{v}-\tilde{v}|^2.
	\end{equation*}
Taking \( T> 0 \) sufficiently small such that \( \int_0^T e^{\beta s}\mathbb{E}\big[ |C(s)|^2\big] \ds <1,\) the above estimate leads to i.e.,
	\begin{equation*}
		|\Phi(v_1) - \Phi(v_2)|\leq \frac{1}{2} |v_1 - v_2|,
	\end{equation*}
	so that $\Phi$ is a contraction mapping.
By the fixed-point theory, there exists a unique \( v \in \mathbb{R}^n \) such that \( \Phi(v) = v \), i.e., $Y^v(0)=v$, from which the existence and uniqueness of solution to BSDE \eqref{newbsde} follows.
\end{proof}

\bibliographystyle{siam}

\end{document}